\documentclass[10pt]{amsart}
\usepackage{qtree,array,youngtab}
\usepackage{color, graphicx, comment}
\usepackage{tikz}

\usepackage{mathtools}
\DeclarePairedDelimiter\ceil{\lceil}{\rceil}
\DeclarePairedDelimiter\floor{\lfloor}{\rfloor}

\definecolor{darkgreen}{rgb}{0.,0.5,0.}

\newcommand{\la}{\lambda}

\allowdisplaybreaks

\numberwithin{equation}{section} \overfullrule 5pt
\newtheorem{thm}{Theorem}[section]

\newtheorem{lem}[thm]{Lemma}

\theoremstyle{definition}

\newcommand{\PP}[1]{\text{\rm #1}}

\newcommand{\inv}{\text{inv}}


\title[Asymptotic formulas and their applications]{%
Some useful theorems for asymptotic formulas and their applications to skew plane partitions and cylindric partitions}
\date{July 11, 2017}

\author{Guo-Niu HAN and Huan XIONG}
\address{Universit\'e de Strasbourg, CNRS, IRMA UMR 7501, F-67000 Strasbourg, France}
\email{guoniu.han@unistra.fr, \quad xiong@math.unistra.fr}

\subjclass[2010]{05A16, 05A17}

\keywords{asymptotic formula, integer partition, plane partition, 
cylindric partition}

\begin{document}
\begin{abstract} 
Inspired by the works of Dewar, Murty and Kot\v{e}\v{s}ovec, we establish some useful theorems for asymptotic formulas. As an application, we obtain asymptotic formulas for the numbers of skew plane partitions and cylindric partitions. We prove that the order of the asymptotic formula for the skew plane partitions of fixed width depends only on the width of the region, not on the profile (the skew zone) itself, while this is not true for cylindric partitions.
\end{abstract}

\maketitle

\section{Introduction}\label{sec:Asym} 
\medskip

Inspired by the works of Dewar, Murty and Kot\v{e}\v{s}ovec \cite{DewarMurty2013, Kotesovec2015}, we establish some useful theorems for
asymptotic formulas.
Define
\begin{equation}\label{def:psi}
\psi_{n}(v, r, b; p) := v \sqrt{\frac{p(1-p)}{2\pi}} \frac{r^{b+(1-p)/2}}{n^{b+1-p/2}} \exp({n^p}{r^{1-p}})
\end{equation}
for $n\in \mathbb{N},$ $v,b\in \mathbb{R},$ $r>0,$ $0<p<1$.

\begin{thm}\label{th:asy_main2}
Let $t_1$ and $t_2$ be two given positive integers with $gcd(t_1,t_2)=1$. 
	Suppose that
	\begin{equation*}
		F_1(q) = \sum_{n=0}^\infty a_{t_1n} q^{t_1n} \qquad \text{and}\qquad
		F_2(q) = \sum_{n=0}^\infty c_{t_2n} q^{t_2n}
	\end{equation*}
	are two power series such that their coefficients satisfy the 
	asymptotic formulas
	\begin{align}
		a_{t_1n} &\ \sim\  t_1\psi_{t_1n}(v_1, r_1, b_1; p),\label{eq:asym_a} \\
		c_{t_2n} &\ \sim\  t_2\psi_{t_2n}(v_2, r_2, b_2; p),\label{eq:asym_c}
	\end{align}
	where   $v_1,b_1,v_2, b_2\in \mathbb{R}$, $r_1,r_2>0$, $0<p<1$. 
Then, the coefficients $d_n$ in the product
	$$F_1(q)F_2(q) = \sum_{n=0}^\infty d_n q^{n}$$ 
	satisfy the following asymptotic formula
	\begin{align}\label{eq:asy_d_n}
		d_n \sim 
		\psi_{n}(v_1v_2, r_1+r_2, b_1+b_2 ; p).
	\end{align}
\end{thm}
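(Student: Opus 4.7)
The plan is to apply Laplace's method to the restricted convolution. Since $a_k$ vanishes outside $t_1\mathbb{Z}$ and $c_l$ vanishes outside $t_2\mathbb{Z}$, a pair $(k,l)=(t_1 j,\,n-t_1 j)$ contributes to $d_n=\sum_{k+l=n}a_kc_l$ exactly when $n - t_1 j \in t_2\mathbb{Z}$; by $\gcd(t_1,t_2)=1$ this pins $j$ to a single residue class $j_0(n)$ modulo $t_2$. Thus
\begin{equation*}
d_n \;=\; \sum_{j\in J_n} a_{t_1 j}\, c_{n-t_1 j}, \qquad J_n \;=\; \{j\geq 0:\ j\equiv j_0(n)\!\!\pmod{t_2},\ t_1 j\leq n\},
\end{equation*}
an arithmetic progression of step $t_2$ in $j$, equivalently of step $t_1 t_2$ in the variable $x=t_1 j$.

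Next I would insert \eqref{eq:asym_a}--\eqref{eq:asym_c} into the summand and treat it as a smooth function of the continuous variable $x$. Its exponential part is $\exp g_n(x)$ with
\begin{equation*}
g_n(x) \;=\; r_1^{1-p} x^p + r_2^{1-p}(n-x)^p,
\end{equation*}
which is strictly concave on $(0,n)$ with unique maximum at $x^\ast = n r_1/(r_1+r_2)$, value $g_n(x^\ast) = (r_1+r_2)^{1-p}\, n^p$, and curvature $|g_n''(x^\ast)| = p(1-p)(r_1+r_2)^{3-p}/(r_1 r_2\, n^{2-p})$. This already produces the exponential order predicted by \eqref{eq:asy_d_n}.

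For the polynomial prefactor I would run the standard saddle-point recipe: on a window $|x - x^\ast| \leq n^{(2-p)/2+\varepsilon}$ the summand is well approximated by $F(x^\ast)\exp\!\bigl(-\tfrac{1}{2}|g_n''(x^\ast)|(x-x^\ast)^2\bigr)$, and Euler--Maclaurin (or Poisson summation) converts the discrete sum over $J_n$, of spacing $t_1 t_2$ in $x$, into $F(x^\ast)\sqrt{2\pi/|g_n''(x^\ast)|}/(t_1 t_2)$. Substituting $x^\ast = n r_1/(r_1+r_2)$ and $n-x^\ast = n r_2/(r_1+r_2)$ into the polynomial prefactors of $\psi_{t_1 j}(v_1,r_1,b_1;p)$ and $\psi_{n-t_1 j}(v_2,r_2,b_2;p)$, the $t_1 t_2$ from \eqref{eq:asym_a}--\eqref{eq:asym_c} cancels the $1/(t_1 t_2)$ from the sum-to-integral conversion; the powers of $r_1, r_2$ recombine into $(r_1+r_2)^{b_1+b_2+(1-p)/2}$, the powers of $n$ into $n^{-(b_1+b_2+1-p/2)}$, and the two $\sqrt{p(1-p)/(2\pi)}$ factors combine with $\sqrt{2\pi/|g_n''(x^\ast)|}$ to give a single $\sqrt{p(1-p)/(2\pi)}$, reproducing exactly $\psi_n(v_1 v_2,\, r_1+r_2,\, b_1+b_2;p)$.

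The main obstacle is the analytic justification that the central window captures essentially all of the sum. The hypotheses \eqref{eq:asym_a}--\eqref{eq:asym_c} are only pointwise asymptotics, so one cannot substitute them off the saddle without more information. One therefore needs either uniform control of the asymptotic remainder or an absolute upper bound of the form $a_{t_1 j} \ll \psi_{t_1 j}(v_1,r_1,b_1;p)$ valid for \emph{all} $j$, together with a tail estimate showing that $\sum \psi_{t_1 j}\psi_{n-t_1 j}$ restricted to $|t_1 j - x^\ast|$ large is negligible; this last step is routine once $x^\ast$ and $g_n''(x^\ast)$ are known. The required uniform bound should be furnished by one of the general theorems for asymptotic formulas that the introduction promises to establish, so in the final write-up the proof amounts to invoking that auxiliary theorem and then performing the Laplace computation above.
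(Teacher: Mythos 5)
Your strategy coincides with the paper's: restrict the convolution to the single residue class of $j$ forced by $\gcd(t_1,t_2)=1$, locate the saddle of $r_1^{1-p}x^p+r_2^{1-p}(n-x)^p$ at $x^\ast=nr_1/(r_1+r_2)$, and run Laplace's method together with a sum-to-integral conversion at spacing $t_1t_2$. Your bookkeeping of the prefactor (value and curvature at the saddle, the cancellation of the $t_1t_2$ from the hypotheses against the $1/(t_1t_2)$ from the spacing, the recombination of the powers of $r_1,r_2,n$) is correct and does reproduce $\psi_n(v_1v_2,\,r_1+r_2,\,b_1+b_2;p)$.

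The one place you stop short --- controlling the sum off the central window --- is not the obstacle you make it out to be, and in particular no auxiliary theorem is needed. After normalizing $v_1,v_2>0$ (the paper does this without loss of generality), the hypothesis $a_{t_1j}\sim t_1\psi_{t_1j}(v_1,r_1,b_1;p)$ already yields the absolute bound $a_{t_1j}=O\bigl(\psi_{t_1j}(v_1,r_1,b_1;p)\bigr)$ uniformly in $j\ge 1$: the ratio tends to $1$, and only finitely many terms are exceptional. This is exactly the ``uniform upper bound'' you say is missing, and it comes for free from the meaning of $\sim$. The second adjustment is that the paper takes the central window to have width \emph{proportional to} $n$, namely $|x-x^\ast|<\delta n$ with $\delta$ fixed, rather than your shrinking window $|x-x^\ast|\le n^{(2-p)/2+\varepsilon}$. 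This has two payoffs: (i) on such a window both indices $t_1j$ and $n-t_1j$ exceed $cn$, so for $n$ large every term in the window satisfies the $(1\pm\epsilon)$ form of the pointwise asymptotics, and the polynomial prefactor is within $(1\pm\epsilon)$ of its value at the saddle by continuity alone --- no uniformity of the asymptotic remainder is required, only a final $\epsilon\to 0$ argument; (ii) each tail consists of $O(n)$ terms whose exponential part is at most $\exp\bigl(n^pf(x_0\pm\delta)\bigr)$ with $f(x_0\pm\delta)$ strictly below the maximum $f(x_0)=(r_1+r_2)^{1-p}$, so a crude polynomial-times-exponential bound shows the tails are $o$ of the main term. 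Finally, the discrete-to-continuous step is handled not by Euler--Maclaurin but by the elementary unimodal comparison of Lemma~\ref{th:asy_int}, with the resulting integral evaluated by Lemma~\ref{th:Laplace_method}. With these adjustments your outline closes into a complete proof that is essentially the paper's.
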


Some special cases of Theorem \ref{th:asy_main2} have been established.  
In 2013, Dewar and Murty \cite{DewarMurty2013} 
proved the case of $p=1/2,\, t_1=t_2=1$.
Later, Kot\v{e}\v{s}ovec \cite{Kotesovec2015} obtained the case of $0<p<1$,  $t_1=t_2=1$. We add two more parameters $t_1$ and $t_2$ in order to calculate the asymptotic formulas for plane partitions,  
without them Theorem \ref{th:Asy_Ribbon:multi} would not be proven.
Our further contribution is to reformulate the asymptotic formula in a 
much more simpler form \eqref{eq:asym_a}, \eqref{eq:asym_c} and \eqref{eq:asy_d_n}, so that the result of Theorem \ref{th:asy_main2} can be easily iterated 
for handling a product of multiple power series $F_1(q)F_2(q)\cdots F_k(q)$ (see Theorem \ref{th:asy_multi}). We also obtain the following two theorems, which are useful to find asymptotic formulas for various plane partitions. 

\begin{thm}\label{th:Asy_Ribbon:multi}
Let $m$ be a positive integer. 
Suppose that $x_i$ and 
	$y_i$ $(1\leq i\leq m)$ are 
positive integers such that 
$\gcd(x_1, x_2, \ldots, x_m, y_1, y_2, \ldots, y_m)=1$.
Then, the coefficients $d_n$ in the following infinite product
$$
\prod_{i=1}^m \prod_{k\geq 0}\frac {1}{1-q^{x_ik+y_i}}=\sum_{n=0}^\infty d_{n} q^{n}
$$
have the following asymptotic formula
\begin{align}\label{eq:asy_product_2_1}
d_{n} \ \sim  \ 
	 v {\frac{1}{2\sqrt{2\pi}}} 
	\frac{r^{b+1/4}}{n^{b+3/4}} \exp(\sqrt{nr}),
\end{align}
where
$$
	v=\prod_{i=1}^m \frac{\Gamma(y_i/x_i)}{\sqrt{x_i\pi}} (\frac {x_i}{2})^{y_i/x_i}, \qquad
	r=\sum_{i=1}^m\frac {2\pi^2}{3x_i}, \qquad 
	b=\sum_{i=1}^m(\frac{y_i}{2x_i} - \frac 14).
$$   
\end{thm}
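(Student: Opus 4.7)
The plan is to establish the single-factor asymptotic separately and then apply the multi-factor iterate of Theorem~\ref{th:asy_main2}, namely Theorem~\ref{th:asy_multi}. Write the product as $\prod_{i=1}^m F_i(q)$ with $F_i(q) = \sum_{n\geq 0} a^{(i)}_n q^n$. Each $a^{(i)}_n$ vanishes unless $n$ is a nonnegative integer combination of $x_i$ and $y_i$, so in particular a multiple of $d_i := \gcd(x_i,y_i)$. The global hypothesis $\gcd(x_1,\ldots,x_m,y_1,\ldots,y_m)=1$ forces $\gcd(d_1,\ldots,d_m)=1$, which is exactly the coprimality condition demanded by the iterate of Theorem~\ref{th:asy_main2}.

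The main step is then to prove, for each fixed $i$, the single-factor asymptotic
$$a^{(i)}_{d_i n}\ \sim\ d_i\, \psi_{d_i n}(v_i, r_i, b_i; 1/2)$$
with the explicit constants
$$v_i = \frac{\Gamma(y_i/x_i)}{\sqrt{x_i\pi}}\Bigl(\frac{x_i}{2}\Bigr)^{y_i/x_i},\qquad r_i = \frac{2\pi^2}{3x_i},\qquad b_i = \frac{y_i}{2x_i}-\frac14.$$
Once this is in hand, Theorem~\ref{th:asy_multi} combines the $F_i$ via the aggregation rules $v=\prod_i v_i$, $r=\sum_i r_i$, $b=\sum_i b_i$, which reproduce the parameters of the statement verbatim; unfolding the definition~\eqref{def:psi} of $\psi$ at $p=1/2$ then gives~\eqref{eq:asy_product_2_1}.

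To produce the single-factor asymptotic I would use Meinardus-style saddle-point analysis. A Mellin transform rewrites
$$\log F_i(e^{-t}) = \sum_{k\geq 0}\sum_{j\geq 1}\frac{e^{-t(x_ik+y_i)j}}{j} = \frac{1}{2\pi i}\int_c \Gamma(s)\,\zeta(s+1)\, x_i^{-s}\,\zeta(s,y_i/x_i)\, t^{-s}\, ds,$$
and shifting the contour past the simple pole at $s=1$ and the double pole at $s=0$, using $\zeta(0,a)=\tfrac12-a$ and $\zeta'(0,a)=\log\Gamma(a)-\tfrac12\log 2\pi$, yields the expansion
$$\log F_i(e^{-t}) = \frac{\pi^2}{6x_it} + \Bigl(\frac{y_i}{x_i}-\frac12\Bigr)\log t + \log\Gamma\Bigl(\frac{y_i}{x_i}\Bigr) + \Bigl(\frac{y_i}{x_i}-\frac12\Bigr)\log x_i - \frac12\log(2\pi) + o(1)$$
as $t\to 0^+$. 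Extracting coefficients at the saddle point $t_*=\pi/\sqrt{6x_in}$, with the standard Gaussian transverse integration contributing $1/\sqrt{2\pi g''(t_*)}$, packages these constants into the desired $\psi$-form.

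The main obstacle is the bookkeeping in this last step: verifying that the prefactor built from $\zeta'(0,y_i/x_i)$ together with the saddle-width $1/\sqrt{2\pi g''(t_*)}$ regroups precisely into $v_i\,r_i^{b_i+1/4}/(2\sqrt{2\pi})$, and that the minor-arc contribution is multiplicatively $1+o(1)$. After that, iterating Theorem~\ref{th:asy_main2} pairwise -- coprimality being preserved at each stage thanks to $\gcd(d_1,\ldots,d_m)=1$ -- closes the proof.
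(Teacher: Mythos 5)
Your top-level architecture coincides with the paper's: decompose the product into single progressions $F_i$, establish the one-factor asymptotic in the normalized form $a^{(i)}_{d_i n}\sim d_i\,\psi_{d_i n}(v_i,r_i,b_i;1/2)$ with $d_i=\gcd(x_i,y_i)$, and feed these into the iterate Theorem~\ref{th:asy_multi}, using $\gcd(d_1,\ldots,d_m)=1$ to keep the pairwise applications of Theorem~\ref{th:asy_main2} legitimate. Where you genuinely diverge is in how the single-factor input is obtained: the paper does not prove it at all, but quotes Ingham's 1941 theorem (Lemma~\ref{th:Asy_Ribbon:2}, stated for $\gcd(x,y)=1$) and then handles general $d_i$ by the substitution $q\mapsto q^{d_i}$ together with the rescaling identity $\psi_{n}(v z^{1/2-y/x}, rz, b;1/2)=z\,\psi_{zn}(v,r,b;1/2)$ (this is Theorem~\ref{th:Asy_Ribbon:multi_general}, of which the stated theorem is the case $t=1$). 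You instead propose to re-derive Ingham's formula by a Meinardus-style Mellin/saddle-point analysis; your expansion of $\log F_i(e^{-t})$ is correct (the residues at $s=1$ and at the double pole $s=0$ come out exactly as you write them), and carrying the bookkeeping through would indeed reproduce $v_i, r_i, b_i$. This buys self-containedness at the cost of substantial analytic work that the paper deliberately outsources.

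One point in your plan needs repair. When $d_i=\gcd(x_i,y_i)>1$, it is not true that the minor-arc contribution in the coefficient-extraction integral is multiplicatively $1+o(1)$: the integrand has the exact symmetry $F_i(q)=F_i(\zeta q)$ for every $d_i$-th root of unity $\zeta$, so there are $d_i$ major arcs contributing equally, and summing them is precisely what produces the factor $d_i$ in the target $a^{(i)}_{d_i n}\sim d_i\,\psi_{d_i n}(\cdots)$ (and the vanishing of $a^{(i)}_n$ off the progression). You should either perform the saddle-point analysis after the substitution $Q=q^{d_i}$, where $\gcd(x_i/d_i,y_i/d_i)=1$ and a single dominant arc suffices, and then convert back via the $\psi$-rescaling identity exactly as the paper does; or explicitly sum over the $d_i$ arcs. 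With that adjustment the argument closes.
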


\begin{thm}\label{th:p0_asy_main2}
Let $t_i\in \mathbb{N}$ for $1\leq i\leq m$.  Suppose that
\begin{equation*}
	F(q) = \sum_{n=0}^\infty a_{n} q^{n} \qquad\text{and}\qquad
F(q) \prod_{i=1}^m \frac{1}{1-q^{t_i}} = \sum_{n=0}^\infty d_n q^{n}
\end{equation*}
are two power series.
If	
\begin{align*}
a_{n} &\sim n^\alpha \exp(\beta n^p)
\end{align*}
where $0< p<1$, $\alpha\in \mathbb{R}, \beta>0$, then  we have
\begin{align}\label{eq:p0_asy_d_n}
d_n \sim \frac{n^{\alpha+m(1-p)}}{\beta^mp^m\prod_{i=1}^mt_i} \exp(\beta n^p).
\end{align}
\end{thm}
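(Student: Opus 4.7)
The plan is to prove the theorem by induction on $m$, with the substantive step being the base case $m=1$. It is convenient to establish the $m=1$ version with an arbitrary leading constant $C>0$ in the hypothesis: if $a_n\sim C n^\alpha\exp(\beta n^p)$ then $d_n\sim \frac{C}{\beta p t_1}n^{\alpha+1-p}\exp(\beta n^p)$. The shape of the asymptotic is preserved at each step (only $\alpha$ shifts by $1-p$ and an explicit factor $1/(\beta p t_i)$ accumulates), so iterating this $m$ times yields formula \eqref{eq:p0_asy_d_n}.

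For $m=1$ write $t=t_1$ and split the convolution
$$
d_n=\sum_{k=0}^{\lfloor n/t\rfloor} a_{n-tk}
=\sum_{k=0}^{K_n}a_{n-tk}+\sum_{k=K_n+1}^{\lfloor n/t\rfloor}a_{n-tk},
$$
with cutoff $K_n:=\lfloor n^{1-p}(\log n)^2\rfloor$. On the main sum the Taylor expansion
$\beta(n-tk)^p=\beta n^p-s_n k+O(k^2 n^{p-2})$,
with $s_n:=\beta p t\, n^{p-1}\to 0^+$, is uniform in $0\le k\le K_n$; the remainder satisfies $K_n^2 n^{p-2}=n^{-p}(\log n)^4\to 0$, and $n-tK_n\to\infty$ ensures that the hypothesis on $a_j$ applies uniformly. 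This gives
$$
\sum_{k=0}^{K_n}a_{n-tk}\sim C n^\alpha\exp(\beta n^p)\sum_{k=0}^{K_n}e^{-s_n k}
\sim \frac{C n^{\alpha+1-p}}{\beta p t}\exp(\beta n^p),
$$
since $e^{-s_n K_n}=\exp(-\beta p t(\log n)^2)\to 0$, so the geometric tail is essentially complete and its sum is asymptotic to $1/s_n$.

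For the tail $k>K_n$, the concavity of $(1-x)^p$ on $[0,1)$ with $0<p<1$ yields $(n-tk)^p\le n^p-ptk\, n^{p-1}\le n^p-pt(\log n)^2$. Combined with the uniform bound $a_j=O(j^{|\alpha|}\exp(\beta j^p)+1)$ deduced from the hypothesis, this gives
$$
\sum_{k=K_n+1}^{\lfloor n/t\rfloor}a_{n-tk}=O\bigl(n^{|\alpha|+1}\exp\bigl(\beta n^p-\beta pt(\log n)^2\bigr)\bigr),
$$
which is negligible beside $n^{\alpha+1-p}\exp(\beta n^p)$. Adding the two pieces yields the $m=1$ case, and the induction completes the proof.

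The delicate point is the simultaneous balance in the choice of $K_n$: decay of the geometric tail requires $s_n K_n\to\infty$, i.e.\ $K_n\gg n^{1-p}$, while uniform validity of the linearization requires $K_n^2 n^{p-2}\to 0$, i.e.\ $K_n\ll n^{1-p/2}$. A suitable $K_n$ exists precisely because $0<p<1$, which is where this hypothesis enters the argument; the same condition ensures that $s_n\to 0$ so that the geometric sum genuinely behaves as $1/s_n$ rather than as a constant.
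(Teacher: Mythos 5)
Your proof is correct, but it takes a genuinely different route from the paper's. The paper also reduces to $m=1$ by induction, but then it discards only the finitely many terms $a_{n-tj}$ with $n-tj<N$ (an $O(1)$ contribution), sandwiches the remaining sum between integrals of $x^\alpha\exp(\beta x^p)$ over essentially the whole range $[N,n]$ using the eventual monotonicity of that function, and evaluates the integral via the near-antiderivative $\frac{x^{\alpha+1-p}}{\beta p}\exp(\beta x^p)$, whose derivative is $\bigl(x^\alpha+\frac{\alpha+1-p}{\beta p}x^{\alpha-p}\bigr)\exp(\beta x^p)\sim x^\alpha\exp(\beta x^p)$. You instead localize the convolution to the window $k\le K_n=\lfloor n^{1-p}(\log n)^2\rfloor$, linearize the exponent there, and sum an approximate geometric series with ratio $e^{-s_n}$, $s_n=\beta p t\,n^{p-1}$, which produces the factor $1/s_n=n^{1-p}/(\beta p t)$ directly; the tail is killed by the concavity bound $(1-x)^p\le 1-px$ together with a crude count of terms. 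Both arguments are sound. Yours makes explicit that the mass of the convolution sits in the top $\asymp n^{1-p}$ indices and avoids any integral evaluation, at the price of the two-sided constraint $n^{1-p}\ll K_n\ll n^{1-p/2}$ on the cutoff, which you correctly identify as the point where $0<p<1$ enters; the paper's integral comparison needs no cutoff and extracts the constant from a one-line antiderivative computation. One small remark: your version with an arbitrary leading constant $C$ is indeed what makes the iteration legitimate (the paper leaves this implicit), though it also follows immediately from linearity of the convolution in the $a_n$.
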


\medskip

An {\it ordinary plane partition} (\PP{PP}) is a filling $\omega=(\omega_{i,j})$ of
the quarter plane $\Lambda=\{(i,j)\mid i,j\geq 1\}$ 
 with
nonnegative integers
such that rows and columns decrease weakly, and 
the size $|\omega|=\sum \omega_{i,j}$ is finite.
The generating function for ordinary plane partitions  is known since MacMahon \cite{MacMahon1899, MacMahon1916, Stanley1971}: 
\begin{equation} \label{eq:gfPP}
	\sum_{\omega \in \PP{PP}} z^{|\omega|} 
	= 
	\prod_{i=1}^{\infty}\prod_{j=1}^{\infty}\frac{1}{1-z^{i+j-1}}=\prod_{i=1}^\infty (1-z^k)^{-k}. 
\end{equation}

For two partitions $\la$ and $\mu$,
	We write $\la \succ \mu$ or $\mu \prec \la$ if $\la / \mu$ is a horizontal strip (see \cite{Macdonald1995,  StanleyEC2}).
When reading an ordinary plane partition $\omega$ along the diagonals from left to right, we obtain a sequence of partitions
$(\la^0, \la^1, \ldots, \la^h)$
such that $\la^{i-1}\prec \la^i $ or $\la^{i-1} \succ \la^i$ for $1\leq i \leq h$. For simplicity, we identify the ordinary plane partition $\omega$ and the sequence of partitions by writing
$$
\omega=
(\la^0, \la^1, \ldots, \la^h).$$

A $(1,-1)$-sequence $\delta$ is called a $profile$. Let $|\delta|_1$
(resp. $|\delta|_{-1}$) be the number of letters $1$ (resp.  $-1$) in $\delta$.
A {\it skew plane partition} (\PP{SkewPP}) with profile $\delta=(\delta_1, \delta_2, \ldots, \delta_h)$ is a sequence of partitions $
\omega=
(\la^0, \la^1, \ldots, \la^h)$ 
such that $\la^0=\la^h=\emptyset$, and $\la^{i-1}\prec \la^i $ (resp. $\la^{i-1} \succ \la^i$)
if $\delta_i = 1$  (resp.  $\delta_i = -1$). Its  {\it size} is defined by $|\omega|=\sum_{i=0}^{h}|\la^i|$. 
For example,  $\omega=(\emptyset, (2),(3,2),(2),(3),(4,3),(3,2),(3),\emptyset)$ is a skew plane partition with profile $\delta=(1,1,-1,1,1,-1,-1,-1)$  and size $ 27 $. This skew plane partition can also be visualized as the following: 
$$
\begin{array}{llll}
\ & 4 & 3 & 3\\
\ & 3 & 3 & 2 \\
3 & 2 & &\\
{2} & 2 & &
\end{array}
$$

The generating function for skew plane partitions with profile $\delta$ is (see \cite{Borodin2007,  Sagan1993, Stanley1972})
\begin{equation}\label{eq:gfSkewPP}
\sum_{\omega \in \PP{SkewPP}_\delta} z^{|\omega|}
=
\prod_{\substack{ i<j   \\ \delta_i > \delta_j   } } 
\frac{1}{1-z^{j-i}}.
\end{equation}

\medskip

$$
\begin{tikzpicture}[scale=0.6]
\begin{scope}[xshift=0, yshift=0]
\fill [gray!30](0.0000,0.2000)--(0.0000,0.4000)--(0.0000,0.6000)--(0.0000,0.8000)--(0.0000,1.0000)--(0.0000,1.2000)--(0.0000,1.4000)--(0.0000,1.6000)--(0.0000,1.8000)--(0.0000,2.0000)--(0.0000,2.2000)--(0.0000,2.4000)--(0.0000,2.6000)--(0.0000,2.8000)--(0.0000,3.0000)--(0.0000,3.2000)--(0.0000,3.4000)--(0.2000,3.4000)--(0.4000,3.4000)--(0.6000,3.4000)--(0.8000,3.4000)--(1.0000,3.4000)--(1.2000,3.4000)--(1.4000,3.4000)--(1.6000,3.4000)--(1.8000,3.4000)--(2.0000,3.4000)--(2.2000,3.4000)--(2.4000,3.4000)--(2.6000,3.4000)--(2.8000,3.4000)--(3.0000,3.4000)--(3.2000,3.4000)--(3.4000,3.4000)--(3.6000,3.4000)--(3.8000,3.4000)--(4.0000,3.4000)--(4.0000,3.2000)--(4.0000,3.0000)--(4.0000,2.8000)--(4.0000,2.6000)--(4.0000,2.4000)--(4.0000,2.2000)--(4.0000,2.0000)--(4.0000,1.8000)--(4.0000,1.6000)--(4.0000,1.4000)--(4.0000,1.2000)--(4.0000,1.0000)--(4.0000,0.8000)--(4.0000,0.6000)--(4.0000,0.4000)--(4.0000,0.2000)--(3.8000,0.2000)--(3.6000,0.2000)--(3.4000,0.2000)--(3.2000,0.2000)--(3.0000,0.2000)--(2.8000,0.2000)--(2.6000,0.2000)--(2.4000,0.2000)--(2.2000,0.2000)--(2.0000,0.2000)--(1.8000,0.2000)--(1.6000,0.2000)--(1.4000,0.2000)--(1.2000,0.2000)--(1.0000,0.2000)--(0.8000,0.2000)--(0.6000,0.2000)--(0.4000,0.2000)--(0.2000,0.2000)--(0.0000,0.2000);
\draw [gray!10](0.0000,0.2000)--(0.0000,3.4000);
\draw [gray!10](0.2000,0.2000)--(0.2000,3.4000);
\draw [gray!10](0.4000,0.2000)--(0.4000,3.4000);
\draw [gray!10](0.6000,0.2000)--(0.6000,3.4000);
\draw [gray!10](0.8000,0.2000)--(0.8000,3.4000);
\draw [gray!10](1.0000,0.2000)--(1.0000,3.4000);
\draw [gray!10](1.2000,0.2000)--(1.2000,3.4000);
\draw [gray!10](1.4000,0.2000)--(1.4000,3.4000);
\draw [gray!10](1.6000,0.2000)--(1.6000,3.4000);
\draw [gray!10](1.8000,0.2000)--(1.8000,3.4000);
\draw [gray!10](2.0000,0.2000)--(2.0000,3.4000);
\draw [gray!10](2.2000,0.2000)--(2.2000,3.4000);
\draw [gray!10](2.4000,0.2000)--(2.4000,3.4000);
\draw [gray!10](2.6000,0.2000)--(2.6000,3.4000);
\draw [gray!10](2.8000,0.2000)--(2.8000,3.4000);
\draw [gray!10](3.0000,0.2000)--(3.0000,3.4000);
\draw [gray!10](3.2000,0.2000)--(3.2000,3.4000);
\draw [gray!10](3.4000,0.2000)--(3.4000,3.4000);
\draw [gray!10](3.6000,0.2000)--(3.6000,3.4000);
\draw [gray!10](3.8000,0.2000)--(3.8000,3.4000);
\draw [gray!10](0.0000,0.2000)--(4.0000,0.2000);
\draw [gray!10](0.0000,0.4000)--(4.0000,0.4000);
\draw [gray!10](0.0000,0.6000)--(4.0000,0.6000);
\draw [gray!10](0.0000,0.8000)--(4.0000,0.8000);
\draw [gray!10](0.0000,1.0000)--(4.0000,1.0000);
\draw [gray!10](0.0000,1.2000)--(4.0000,1.2000);
\draw [gray!10](0.0000,1.4000)--(4.0000,1.4000);
\draw [gray!10](0.0000,1.6000)--(4.0000,1.6000);
\draw [gray!10](0.0000,1.8000)--(4.0000,1.8000);
\draw [gray!10](0.0000,2.0000)--(4.0000,2.0000);
\draw [gray!10](0.0000,2.2000)--(4.0000,2.2000);
\draw [gray!10](0.0000,2.4000)--(4.0000,2.4000);
\draw [gray!10](0.0000,2.6000)--(4.0000,2.6000);
\draw [gray!10](0.0000,2.8000)--(4.0000,2.8000);
\draw [gray!10](0.0000,3.0000)--(4.0000,3.0000);
\draw [gray!10](0.0000,3.2000)--(4.0000,3.2000);
\draw [black](0.0000,0.2000)--(0.0000,0.4000)--(0.0000,0.6000)--(0.0000,0.8000)--(0.0000,1.0000)--(0.0000,1.2000)--(0.0000,1.4000)--(0.0000,1.6000)--(0.0000,1.8000)--(0.0000,2.0000)--(0.0000,2.2000)--(0.0000,2.4000)--(0.0000,2.6000)--(0.0000,2.8000)--(0.0000,3.0000)--(0.0000,3.2000)--(0.0000,3.4000)--(0.2000,3.4000)--(0.4000,3.4000)--(0.6000,3.4000)--(0.8000,3.4000)--(1.0000,3.4000)--(1.2000,3.4000)--(1.4000,3.4000)--(1.6000,3.4000)--(1.8000,3.4000)--(2.0000,3.4000)--(2.2000,3.4000)--(2.4000,3.4000)--(2.6000,3.4000)--(2.8000,3.4000)--(3.0000,3.4000)--(3.2000,3.4000)--(3.4000,3.4000)--(3.6000,3.4000)--(3.8000,3.4000)--(4.0000,3.4000);
\draw (0.0000,3.4000)--(0.0000,-0.2000);
\draw (0.0000,3.4000)--(4.4000,3.4000);
\draw (2,-1) node [] {A: Ordinary PP};
\end{scope}
\begin{scope}[xshift=160, yshift=0]
\fill [gray!30](2.6000,1.8000)--(2.4000,1.8000)--(2.2000,1.8000)--(2.0000,1.8000)--(1.8000,1.8000)--(1.6000,1.8000)--(1.4000,1.8000)--(1.2000,1.8000)--(1.0000,1.8000)--(0.8000,1.8000)--(0.6000,1.8000)--(0.4000,1.8000)--(0.2000,1.8000)--(0.0000,1.8000)--(0.0000,2.0000)--(0.0000,2.2000)--(0.0000,2.4000)--(0.0000,2.6000)--(0.0000,2.8000)--(0.2000,2.8000)--(0.4000,2.8000)--(0.4000,3.0000)--(0.6000,3.0000)--(0.8000,3.0000)--(1.0000,3.0000)--(1.0000,3.2000)--(1.2000,3.2000)--(1.2000,3.4000)--(1.4000,3.4000)--(1.6000,3.4000)--(1.8000,3.4000)--(2.0000,3.4000)--(2.2000,3.4000)--(2.4000,3.4000)--(2.6000,3.4000)--(2.6000,3.2000)--(2.6000,3.0000)--(2.6000,2.8000)--(2.6000,2.6000)--(2.6000,2.4000)--(2.6000,2.2000)--(2.6000,2.0000)--(2.6000,1.8000);
\draw [gray!10](0.0000,1.8000)--(0.0000,2.8000);
\draw [gray!10](0.2000,1.8000)--(0.2000,2.8000);
\draw [gray!10](0.4000,1.8000)--(0.4000,3.0000);
\draw [gray!10](0.6000,1.8000)--(0.6000,3.0000);
\draw [gray!10](0.8000,1.8000)--(0.8000,3.0000);
\draw [gray!10](1.0000,1.8000)--(1.0000,3.2000);
\draw [gray!10](1.2000,1.8000)--(1.2000,3.4000);
\draw [gray!10](1.4000,1.8000)--(1.4000,3.4000);
\draw [gray!10](1.6000,1.8000)--(1.6000,3.4000);
\draw [gray!10](1.8000,1.8000)--(1.8000,3.4000);
\draw [gray!10](2.0000,1.8000)--(2.0000,3.4000);
\draw [gray!10](2.2000,1.8000)--(2.2000,3.4000);
\draw [gray!10](2.4000,1.8000)--(2.4000,3.4000);
\draw [gray!10](0.0000,1.8000)--(2.6000,1.8000);
\draw [gray!10](0.0000,2.0000)--(2.6000,2.0000);
\draw [gray!10](0.0000,2.2000)--(2.6000,2.2000);
\draw [gray!10](0.0000,2.4000)--(2.6000,2.4000);
\draw [gray!10](0.0000,2.6000)--(2.6000,2.6000);
\draw [gray!10](0.0000,2.8000)--(2.6000,2.8000);
\draw [gray!10](0.4000,3.0000)--(2.6000,3.0000);
\draw [gray!10](1.0000,3.2000)--(2.6000,3.2000);
\draw [black](2.6000,1.8000)--(2.4000,1.8000)--(2.2000,1.8000)--(2.0000,1.8000)--(1.8000,1.8000)--(1.6000,1.8000)--(1.4000,1.8000)--(1.2000,1.8000)--(1.0000,1.8000)--(0.8000,1.8000)--(0.6000,1.8000)--(0.4000,1.8000)--(0.2000,1.8000)--(0.0000,1.8000)--(0.0000,2.0000)--(0.0000,2.2000)--(0.0000,2.4000)--(0.0000,2.6000)--(0.0000,2.8000)--(0.2000,2.8000)--(0.4000,2.8000)--(0.4000,3.0000)--(0.6000,3.0000)--(0.8000,3.0000)--(1.0000,3.0000)--(1.0000,3.2000)--(1.2000,3.2000)--(1.2000,3.4000)--(1.4000,3.4000)--(1.6000,3.4000)--(1.8000,3.4000)--(2.0000,3.4000)--(2.2000,3.4000)--(2.4000,3.4000)--(2.6000,3.4000)--(2.6000,3.2000)--(2.6000,3.0000)--(2.6000,2.8000)--(2.6000,2.6000)--(2.6000,2.4000)--(2.6000,2.2000)--(2.6000,2.0000)--(2.6000,1.8000);
\draw (0.0000,3.4000)--(0.0000,-0.2000);
\draw (0.0000,3.4000)--(4.2000,3.4000);
\draw (2,-1) node [] {B: Skew PP};
\end{scope}
\begin{scope}[xshift=320, yshift=0]
\fill [gray!30](1.6000,0.0000)--(1.4000,0.0000)--(1.4000,0.2000)--(1.2000,0.2000)--(1.2000,0.4000)--(1.0000,0.4000)--(1.0000,0.6000)--(0.8000,0.6000)--(0.8000,0.8000)--(0.6000,0.8000)--(0.6000,1.0000)--(0.4000,1.0000)--(0.4000,1.2000)--(0.2000,1.2000)--(0.2000,1.4000)--(0.0000,1.4000)--(0.0000,1.6000)--(0.0000,1.8000)--(0.0000,2.0000)--(0.0000,2.2000)--(0.0000,2.4000)--(0.2000,2.4000)--(0.2000,2.6000)--(0.4000,2.6000)--(0.6000,2.6000)--(0.6000,2.8000)--(0.8000,2.8000)--(0.8000,3.0000)--(0.8000,3.2000)--(1.0000,3.2000)--(1.0000,3.4000)--(1.2000,3.4000)--(1.4000,3.4000)--(1.6000,3.4000)--(1.8000,3.4000)--(2.0000,3.4000)--(2.0000,3.2000)--(2.2000,3.2000)--(2.2000,3.0000)--(2.4000,3.0000)--(2.4000,2.8000)--(2.6000,2.8000)--(2.6000,2.6000)--(2.8000,2.6000)--(2.8000,2.4000)--(3.0000,2.4000)--(3.0000,2.2000)--(3.2000,2.2000)--(3.2000,2.0000)--(3.4000,2.0000)--(3.4000,1.8000)--(3.2000,1.8000)--(3.2000,1.6000)--(3.0000,1.6000)--(3.0000,1.4000)--(2.8000,1.4000)--(2.8000,1.2000)--(2.6000,1.2000)--(2.6000,1.0000)--(2.4000,1.0000)--(2.4000,0.8000)--(2.2000,0.8000)--(2.2000,0.6000)--(2.0000,0.6000)--(2.0000,0.4000)--(1.8000,0.4000)--(1.8000,0.2000)--(1.6000,0.2000)--(1.6000,0.0000);
\draw [gray!10](0.0000,1.4000)--(0.0000,2.4000);
\draw [gray!10](0.2000,1.2000)--(0.2000,2.6000);
\draw [gray!10](0.4000,1.0000)--(0.4000,2.6000);
\draw [gray!10](0.6000,0.8000)--(0.6000,2.8000);
\draw [gray!10](0.8000,0.6000)--(0.8000,3.2000);
\draw [gray!10](1.0000,0.4000)--(1.0000,3.4000);
\draw [gray!10](1.2000,0.2000)--(1.2000,3.4000);
\draw [gray!10](1.4000,0.0000)--(1.4000,3.4000);
\draw [gray!10](1.6000,0.0000)--(1.6000,3.4000);
\draw [gray!10](1.8000,0.2000)--(1.8000,3.4000);
\draw [gray!10](2.0000,0.4000)--(2.0000,3.4000);
\draw [gray!10](2.2000,0.6000)--(2.2000,3.2000);
\draw [gray!10](2.4000,0.8000)--(2.4000,3.0000);
\draw [gray!10](2.6000,1.0000)--(2.6000,2.8000);
\draw [gray!10](2.8000,1.2000)--(2.8000,2.6000);
\draw [gray!10](3.0000,1.4000)--(3.0000,2.4000);
\draw [gray!10](3.2000,1.6000)--(3.2000,2.2000);
\draw [gray!10](1.4000,0.0000)--(1.6000,0.0000);
\draw [gray!10](1.2000,0.2000)--(1.8000,0.2000);
\draw [gray!10](1.0000,0.4000)--(2.0000,0.4000);
\draw [gray!10](0.8000,0.6000)--(2.2000,0.6000);
\draw [gray!10](0.6000,0.8000)--(2.4000,0.8000);
\draw [gray!10](0.4000,1.0000)--(2.6000,1.0000);
\draw [gray!10](0.2000,1.2000)--(2.8000,1.2000);
\draw [gray!10](0.0000,1.4000)--(3.0000,1.4000);
\draw [gray!10](0.0000,1.6000)--(3.2000,1.6000);
\draw [gray!10](0.0000,1.8000)--(3.4000,1.8000);
\draw [gray!10](0.0000,2.0000)--(3.4000,2.0000);
\draw [gray!10](0.0000,2.2000)--(3.2000,2.2000);
\draw [gray!10](0.0000,2.4000)--(3.0000,2.4000);
\draw [gray!10](0.2000,2.6000)--(2.8000,2.6000);
\draw [gray!10](0.6000,2.8000)--(2.6000,2.8000);
\draw [gray!10](0.8000,3.0000)--(2.4000,3.0000);
\draw [gray!10](0.8000,3.2000)--(2.2000,3.2000);
\draw [black](1.6000,0.0000)--(1.4000,0.0000)--(1.4000,0.2000)--(1.2000,0.2000)--(1.2000,0.4000)--(1.0000,0.4000)--(1.0000,0.6000)--(0.8000,0.6000)--(0.8000,0.8000)--(0.6000,0.8000)--(0.6000,1.0000)--(0.4000,1.0000)--(0.4000,1.2000)--(0.2000,1.2000)--(0.2000,1.4000)--(0.0000,1.4000)--(0.0000,1.6000)--(0.0000,1.8000)--(0.0000,2.0000)--(0.0000,2.2000)--(0.0000,2.4000)--(0.2000,2.4000)--(0.2000,2.6000)--(0.4000,2.6000)--(0.6000,2.6000)--(0.6000,2.8000)--(0.8000,2.8000)--(0.8000,3.0000)--(0.8000,3.2000)--(1.0000,3.2000)--(1.0000,3.4000)--(1.2000,3.4000)--(1.4000,3.4000)--(1.6000,3.4000)--(1.8000,3.4000)--(2.0000,3.4000)--(2.0000,3.2000)--(2.2000,3.2000)--(2.2000,3.0000)--(2.4000,3.0000)--(2.4000,2.8000)--(2.6000,2.8000)--(2.6000,2.6000)--(2.8000,2.6000)--(2.8000,2.4000)--(3.0000,2.4000)--(3.0000,2.2000)--(3.2000,2.2000)--(3.2000,2.0000)--(3.4000,2.0000)--(3.4000,1.8000);
\draw (0.0000,3.4000)--(0.0000,-0.2000);
\draw (0.0000,3.4000)--(3.6000,3.4000);
\draw (2.3,  2.4) node [] {$\lambda$};
\draw (1.1,  1.2) node [] {$\lambda$};
\draw (2,-1) node [] {C: Cylindric PP};
\end{scope}
\end{tikzpicture}
$$
\centerline{Fig. 1. Skew plane partitions and cylindric partitions.}
\medskip

{\it Cylindric partitions} (\PP{CP}) were first introduced by Gessel and Krattenthaler~\cite{GesselKratt1997}, see also \cite{Borodin2007} for an equivalent definition.
A cylindric partition 
with profile $\delta=(\delta_1, \delta_2, \ldots, \delta_h)$ is a sequence of partitions $
\omega=
(\la^0, \la^1, \ldots, \la^h)$ 
such that $\la^0=\la^h$, and  $\la^{i-1}\prec \la^i $ (resp. $\la^{i-1} \succ \la^i$)
if $\delta_i = 1$  (resp.  $\delta_i = -1$). Its  {\it size} is defined by $|\omega|=\sum_{i=0}^{h-1}|\la^i|$  (notice that $\la^h$ is not counted here, which is a little different from skew plane partitions).
For example, $\omega=( (2,1),(3,1),(4,1),(3),(4,2),(2,1) )$  is a cylindric partition with profile $\delta=(1,1,-1,1,-1)$ and size $ 21 $. This cylindric partition can be visualized as the following: 
$$
\begin{array}{llll}
\ & 4 & 2 & \\
4 & 3 & 2 & 1 \\
3 & 1 & &\\
{2} & 1 & &\\
\ & 1 & &
\end{array}
$$

Borodin obtained the generating function for cylindric partitions with profile 
$\delta=(\delta_i)_{1\leq i\leq h}$
(see \cite{Borodin2007,Langer2013B,Tingley2008}):
\begin{equation}\label{eq:gfCP}
	\sum_{\omega \in \PP{CP}_{\delta}} z^{|\omega|}
		=
		\prod_{k \geq 0} \left ( \frac{1}{1-z^{hk+h}} \prod_{\substack{i < j \\ \delta_i > \delta_j}} \frac{1}{1-z^{hk+j-i }} 
		\prod_{\substack{i < j \\ \delta_i < \delta_j }} \frac{1}{1 - z^{hk+h+i-j }} \right ). 
\end{equation}

\smallskip

By using Theorems \ref{th:Asy_Ribbon:multi} and \ref{th:p0_asy_main2}
we obtain 
the asymptotic formulas for the numbers of skew plane partitions
and 
cylindric partitions with size $n$ for fixed widths
in Sections~\ref{sec:PP} and~\ref{sec:CP} respectively.
Let us reproduce the asymptotic formulas for some special cases below:

\begin{align*}
\begin{tikzpicture}[scale=0.6]
\begin{scope}[xshift=0, yshift=0]
\fill [gray!30](1.8000,0.2000)--(1.6000,0.2000)--(1.4000,0.2000)--(1.2000,0.2000)--(1.0000,0.2000)--(0.8000,0.2000)--(0.6000,0.2000)--(0.4000,0.2000)--(0.2000,0.2000)--(0.0000,0.2000)--(-0.2000,0.2000)--(-0.4000,0.2000)--(-0.6000,0.2000)--(-0.8000,0.2000)--(-0.8000,0.4000)--(-0.8000,0.6000)--(-0.8000,0.8000)--(-0.6000,0.8000)--(-0.4000,0.8000)--(-0.2000,0.8000)--(0.0000,0.8000)--(0.2000,0.8000)--(0.4000,0.8000)--(0.6000,0.8000)--(0.8000,0.8000)--(1.0000,0.8000)--(1.2000,0.8000)--(1.4000,0.8000)--(1.6000,0.8000)--(1.8000,0.8000)--(1.8000,0.6000)--(1.8000,0.4000)--(1.8000,0.2000);
\draw [gray!10](-0.8000,0.2000)--(-0.8000,0.8000);
\draw [gray!10](-0.6000,0.2000)--(-0.6000,0.8000);
\draw [gray!10](-0.4000,0.2000)--(-0.4000,0.8000);
\draw [gray!10](-0.2000,0.2000)--(-0.2000,0.8000);
\draw [gray!10](0.0000,0.2000)--(0.0000,0.8000);
\draw [gray!10](0.2000,0.2000)--(0.2000,0.8000);
\draw [gray!10](0.4000,0.2000)--(0.4000,0.8000);
\draw [gray!10](0.6000,0.2000)--(0.6000,0.8000);
\draw [gray!10](0.8000,0.2000)--(0.8000,0.8000);
\draw [gray!10](1.0000,0.2000)--(1.0000,0.8000);
\draw [gray!10](1.2000,0.2000)--(1.2000,0.8000);
\draw [gray!10](1.4000,0.2000)--(1.4000,0.8000);
\draw [gray!10](1.6000,0.2000)--(1.6000,0.8000);
\draw [gray!10](-0.8000,0.2000)--(1.8000,0.2000);
\draw [gray!10](-0.8000,0.4000)--(1.8000,0.4000);
\draw [gray!10](-0.8000,0.6000)--(1.8000,0.6000);
\draw [black](1.8000,0.2000)--(1.6000,0.2000)--(1.4000,0.2000)--(1.2000,0.2000)--(1.0000,0.2000)--(0.8000,0.2000)--(0.6000,0.2000)--(0.4000,0.2000)--(0.2000,0.2000)--(0.0000,0.2000)--(-0.2000,0.2000)--(-0.4000,0.2000)--(-0.6000,0.2000)--(-0.8000,0.2000)--(-0.8000,0.4000)--(-0.8000,0.6000)--(-0.8000,0.8000)--(-0.6000,0.8000)--(-0.4000,0.8000)--(-0.2000,0.8000)--(0.0000,0.8000)--(0.2000,0.8000)--(0.4000,0.8000)--(0.6000,0.8000)--(0.8000,0.8000)--(1.0000,0.8000)--(1.2000,0.8000)--(1.4000,0.8000)--(1.6000,0.8000)--(1.8000,0.8000);
\draw (-0.8000,0.8000)--(-0.8000,-0.2000);
\draw (-0.8000,0.8000)--(2.2000,0.8000);
\draw (0.8,-1) node [] {PPa $\sim \frac{1.93}{n^3} e^{4.44 \sqrt{n}}$};
\end{scope}
\begin{scope}[xshift=180, yshift=0]
\fill [gray!30](1.8000,0.2000)--(1.6000,0.2000)--(1.4000,0.2000)--(1.2000,0.2000)--(1.0000,0.2000)--(0.8000,0.2000)--(0.6000,0.2000)--(0.4000,0.2000)--(0.2000,0.2000)--(0.0000,0.2000)--(-0.2000,0.2000)--(-0.4000,0.2000)--(-0.6000,0.2000)--(-0.8000,0.2000)--(-0.8000,0.4000)--(-0.8000,0.6000)--(-0.6000,0.6000)--(-0.6000,0.8000)--(-0.4000,0.8000)--(-0.2000,0.8000)--(0.0000,0.8000)--(0.2000,0.8000)--(0.4000,0.8000)--(0.6000,0.8000)--(0.8000,0.8000)--(1.0000,0.8000)--(1.2000,0.8000)--(1.4000,0.8000)--(1.6000,0.8000)--(1.8000,0.8000)--(1.8000,0.6000)--(1.8000,0.4000)--(1.8000,0.2000);
\draw [gray!10](-0.8000,0.2000)--(-0.8000,0.6000);
\draw [gray!10](-0.6000,0.2000)--(-0.6000,0.8000);
\draw [gray!10](-0.4000,0.2000)--(-0.4000,0.8000);
\draw [gray!10](-0.2000,0.2000)--(-0.2000,0.8000);
\draw [gray!10](0.0000,0.2000)--(0.0000,0.8000);
\draw [gray!10](0.2000,0.2000)--(0.2000,0.8000);
\draw [gray!10](0.4000,0.2000)--(0.4000,0.8000);
\draw [gray!10](0.6000,0.2000)--(0.6000,0.8000);
\draw [gray!10](0.8000,0.2000)--(0.8000,0.8000);
\draw [gray!10](1.0000,0.2000)--(1.0000,0.8000);
\draw [gray!10](1.2000,0.2000)--(1.2000,0.8000);
\draw [gray!10](1.4000,0.2000)--(1.4000,0.8000);
\draw [gray!10](1.6000,0.2000)--(1.6000,0.8000);
\draw [gray!10](-0.8000,0.2000)--(1.8000,0.2000);
\draw [gray!10](-0.8000,0.4000)--(1.8000,0.4000);
\draw [gray!10](-0.8000,0.6000)--(1.8000,0.6000);
\draw [black](1.8000,0.2000)--(1.6000,0.2000)--(1.4000,0.2000)--(1.2000,0.2000)--(1.0000,0.2000)--(0.8000,0.2000)--(0.6000,0.2000)--(0.4000,0.2000)--(0.2000,0.2000)--(0.0000,0.2000)--(-0.2000,0.2000)--(-0.4000,0.2000)--(-0.6000,0.2000)--(-0.8000,0.2000)--(-0.8000,0.4000)--(-0.8000,0.6000)--(-0.6000,0.6000)--(-0.6000,0.8000)--(-0.4000,0.8000)--(-0.2000,0.8000)--(0.0000,0.8000)--(0.2000,0.8000)--(0.4000,0.8000)--(0.6000,0.8000)--(0.8000,0.8000)--(1.0000,0.8000)--(1.2000,0.8000)--(1.4000,0.8000)--(1.6000,0.8000)--(1.8000,0.8000);
\draw (-0.8000,0.8000)--(-0.8000,-0.2000);
\draw (-0.8000,0.8000)--(2.2000,0.8000);
\draw (0.8,-1) node [] {PPb $\sim \frac{5.81}{n^3} e^{4.44 \sqrt{n}}$};
\end{scope}
\begin{scope}[xshift=360, yshift=0]
\fill [gray!30](1.8000,0.2000)--(1.6000,0.2000)--(1.4000,0.2000)--(1.2000,0.2000)--(1.0000,0.2000)--(0.8000,0.2000)--(0.6000,0.2000)--(0.4000,0.2000)--(0.2000,0.2000)--(0.0000,0.2000)--(-0.2000,0.2000)--(-0.4000,0.2000)--(-0.6000,0.2000)--(-0.8000,0.2000)--(-0.8000,0.4000)--(-0.8000,0.6000)--(-0.6000,0.6000)--(-0.4000,0.6000)--(-0.4000,0.8000)--(-0.2000,0.8000)--(0.0000,0.8000)--(0.2000,0.8000)--(0.4000,0.8000)--(0.6000,0.8000)--(0.8000,0.8000)--(1.0000,0.8000)--(1.2000,0.8000)--(1.4000,0.8000)--(1.6000,0.8000)--(1.8000,0.8000)--(1.8000,0.6000)--(1.8000,0.4000)--(1.8000,0.2000);
\draw [gray!10](-0.8000,0.2000)--(-0.8000,0.6000);
\draw [gray!10](-0.6000,0.2000)--(-0.6000,0.6000);
\draw [gray!10](-0.4000,0.2000)--(-0.4000,0.8000);
\draw [gray!10](-0.2000,0.2000)--(-0.2000,0.8000);
\draw [gray!10](0.0000,0.2000)--(0.0000,0.8000);
\draw [gray!10](0.2000,0.2000)--(0.2000,0.8000);
\draw [gray!10](0.4000,0.2000)--(0.4000,0.8000);
\draw [gray!10](0.6000,0.2000)--(0.6000,0.8000);
\draw [gray!10](0.8000,0.2000)--(0.8000,0.8000);
\draw [gray!10](1.0000,0.2000)--(1.0000,0.8000);
\draw [gray!10](1.2000,0.2000)--(1.2000,0.8000);
\draw [gray!10](1.4000,0.2000)--(1.4000,0.8000);
\draw [gray!10](1.6000,0.2000)--(1.6000,0.8000);
\draw [gray!10](-0.8000,0.2000)--(1.8000,0.2000);
\draw [gray!10](-0.8000,0.4000)--(1.8000,0.4000);
\draw [gray!10](-0.8000,0.6000)--(1.8000,0.6000);
\draw [black](1.8000,0.2000)--(1.6000,0.2000)--(1.4000,0.2000)--(1.2000,0.2000)--(1.0000,0.2000)--(0.8000,0.2000)--(0.6000,0.2000)--(0.4000,0.2000)--(0.2000,0.2000)--(0.0000,0.2000)--(-0.2000,0.2000)--(-0.4000,0.2000)--(-0.6000,0.2000)--(-0.8000,0.2000)--(-0.8000,0.4000)--(-0.8000,0.6000)--(-0.6000,0.6000)--(-0.4000,0.6000)--(-0.4000,0.8000)--(-0.2000,0.8000)--(0.0000,0.8000)--(0.2000,0.8000)--(0.4000,0.8000)--(0.6000,0.8000)--(0.8000,0.8000)--(1.0000,0.8000)--(1.2000,0.8000)--(1.4000,0.8000)--(1.6000,0.8000)--(1.8000,0.8000);
\draw (-0.8000,0.8000)--(-0.8000,-0.2000);
\draw (-0.8000,0.8000)--(2.2000,0.8000);
\draw (0.8,-1) node [] {PPc $\sim \frac{11.62}{n^3} e^{4.44 \sqrt{n}}$};
\end{scope}
\end{tikzpicture}
\\
\begin{tikzpicture}[scale=0.6]
\begin{scope}[xshift=0, yshift=0]
\fill [gray!30](3.2000,0.2000)--(3.2000,0.4000)--(3.0000,0.4000)--(3.0000,0.6000)--(2.8000,0.6000)--(2.8000,0.8000)--(2.6000,0.8000)--(2.6000,1.0000)--(2.4000,1.0000)--(2.4000,1.2000)--(2.2000,1.2000)--(2.2000,1.4000)--(2.0000,1.4000)--(2.0000,1.6000)--(1.8000,1.6000)--(1.8000,1.8000)--(1.6000,1.8000)--(1.6000,2.0000)--(1.4000,2.0000)--(1.4000,2.2000)--(1.2000,2.2000)--(1.2000,2.4000)--(1.0000,2.4000)--(1.0000,2.6000)--(0.8000,2.6000)--(0.8000,2.8000)--(0.8000,3.0000)--(1.0000,3.0000)--(1.2000,3.0000)--(1.4000,3.0000)--(1.6000,3.0000)--(1.6000,2.8000)--(1.8000,2.8000)--(1.8000,2.6000)--(2.0000,2.6000)--(2.0000,2.4000)--(2.2000,2.4000)--(2.2000,2.2000)--(2.4000,2.2000)--(2.4000,2.0000)--(2.6000,2.0000)--(2.6000,1.8000)--(2.8000,1.8000)--(2.8000,1.6000)--(3.0000,1.6000)--(3.0000,1.4000)--(3.2000,1.4000)--(3.2000,1.2000)--(3.4000,1.2000)--(3.4000,1.0000)--(3.6000,1.0000)--(3.6000,0.8000)--(3.8000,0.8000)--(3.8000,0.6000)--(3.6000,0.6000)--(3.6000,0.4000)--(3.4000,0.4000)--(3.4000,0.2000)--(3.2000,0.2000);
\draw [gray!10](0.8000,2.6000)--(0.8000,3.0000);
\draw [gray!10](1.0000,2.4000)--(1.0000,3.0000);
\draw [gray!10](1.2000,2.2000)--(1.2000,3.0000);
\draw [gray!10](1.4000,2.0000)--(1.4000,3.0000);
\draw [gray!10](1.6000,1.8000)--(1.6000,3.0000);
\draw [gray!10](1.8000,1.6000)--(1.8000,2.8000);
\draw [gray!10](2.0000,1.4000)--(2.0000,2.6000);
\draw [gray!10](2.2000,1.2000)--(2.2000,2.4000);
\draw [gray!10](2.4000,1.0000)--(2.4000,2.2000);
\draw [gray!10](2.6000,0.8000)--(2.6000,2.0000);
\draw [gray!10](2.8000,0.6000)--(2.8000,1.8000);
\draw [gray!10](3.0000,0.4000)--(3.0000,1.6000);
\draw [gray!10](3.2000,0.2000)--(3.2000,1.4000);
\draw [gray!10](3.4000,0.2000)--(3.4000,1.2000);
\draw [gray!10](3.6000,0.4000)--(3.6000,1.0000);
\draw [gray!10](3.2000,0.2000)--(3.4000,0.2000);
\draw [gray!10](3.0000,0.4000)--(3.6000,0.4000);
\draw [gray!10](2.8000,0.6000)--(3.8000,0.6000);
\draw [gray!10](2.6000,0.8000)--(3.8000,0.8000);
\draw [gray!10](2.4000,1.0000)--(3.6000,1.0000);
\draw [gray!10](2.2000,1.2000)--(3.4000,1.2000);
\draw [gray!10](2.0000,1.4000)--(3.2000,1.4000);
\draw [gray!10](1.8000,1.6000)--(3.0000,1.6000);
\draw [gray!10](1.6000,1.8000)--(2.8000,1.8000);
\draw [gray!10](1.4000,2.0000)--(2.6000,2.0000);
\draw [gray!10](1.2000,2.2000)--(2.4000,2.2000);
\draw [gray!10](1.0000,2.4000)--(2.2000,2.4000);
\draw [gray!10](0.8000,2.6000)--(2.0000,2.6000);
\draw [gray!10](0.8000,2.8000)--(1.8000,2.8000);
\draw [black](3.2000,0.2000)--(3.2000,0.4000)--(3.0000,0.4000)--(3.0000,0.6000)--(2.8000,0.6000)--(2.8000,0.8000)--(2.6000,0.8000)--(2.6000,1.0000)--(2.4000,1.0000)--(2.4000,1.2000)--(2.2000,1.2000)--(2.2000,1.4000)--(2.0000,1.4000)--(2.0000,1.6000)--(1.8000,1.6000)--(1.8000,1.8000)--(1.6000,1.8000)--(1.6000,2.0000)--(1.4000,2.0000)--(1.4000,2.2000)--(1.2000,2.2000)--(1.2000,2.4000)--(1.0000,2.4000)--(1.0000,2.6000)--(0.8000,2.6000)--(0.8000,2.8000)--(0.8000,3.0000)--(1.0000,3.0000)--(1.2000,3.0000)--(1.4000,3.0000)--(1.6000,3.0000)--(1.6000,2.8000)--(1.8000,2.8000)--(1.8000,2.6000)--(2.0000,2.6000)--(2.0000,2.4000)--(2.2000,2.4000)--(2.2000,2.2000)--(2.4000,2.2000)--(2.4000,2.0000)--(2.6000,2.0000)--(2.6000,1.8000)--(2.8000,1.8000)--(2.8000,1.6000)--(3.0000,1.6000)--(3.0000,1.4000)--(3.2000,1.4000)--(3.2000,1.2000)--(3.4000,1.2000)--(3.4000,1.0000)--(3.6000,1.0000)--(3.6000,0.8000)--(3.8000,0.8000);
\draw (0.8000,3.0000)--(0.8000,-0.2000);
\draw (0.8000,3.0000)--(4.2000,3.0000);
\draw (3.2,  2.0) node [] {$\lambda$};
\draw (2.0,  0.8) node [] {$\lambda$};
\draw (2.5,-1) node [] {CPa $\sim \frac{0.144}n e^{2.56 \sqrt{n}}$};
\end{scope}
\begin{scope}[xshift=180, yshift=0]
\fill [gray!30](3.2000,0.2000)--(3.2000,0.4000)--(3.0000,0.4000)--(3.0000,0.6000)--(2.8000,0.6000)--(2.8000,0.8000)--(2.6000,0.8000)--(2.6000,1.0000)--(2.4000,1.0000)--(2.4000,1.2000)--(2.2000,1.2000)--(2.2000,1.4000)--(2.0000,1.4000)--(2.0000,1.6000)--(1.8000,1.6000)--(1.8000,1.8000)--(1.6000,1.8000)--(1.6000,2.0000)--(1.4000,2.0000)--(1.4000,2.2000)--(1.2000,2.2000)--(1.2000,2.4000)--(1.0000,2.4000)--(1.0000,2.6000)--(1.0000,2.8000)--(1.2000,2.8000)--(1.2000,3.0000)--(1.4000,3.0000)--(1.6000,3.0000)--(1.6000,2.8000)--(1.8000,2.8000)--(1.8000,2.6000)--(2.0000,2.6000)--(2.0000,2.4000)--(2.2000,2.4000)--(2.2000,2.2000)--(2.4000,2.2000)--(2.4000,2.0000)--(2.6000,2.0000)--(2.6000,1.8000)--(2.8000,1.8000)--(2.8000,1.6000)--(3.0000,1.6000)--(3.0000,1.4000)--(3.2000,1.4000)--(3.2000,1.2000)--(3.4000,1.2000)--(3.4000,1.0000)--(3.6000,1.0000)--(3.6000,0.8000)--(3.8000,0.8000)--(3.8000,0.6000)--(3.6000,0.6000)--(3.6000,0.4000)--(3.4000,0.4000)--(3.4000,0.2000)--(3.2000,0.2000);
\draw [gray!10](1.0000,2.4000)--(1.0000,2.8000);
\draw [gray!10](1.2000,2.2000)--(1.2000,3.0000);
\draw [gray!10](1.4000,2.0000)--(1.4000,3.0000);
\draw [gray!10](1.6000,1.8000)--(1.6000,3.0000);
\draw [gray!10](1.8000,1.6000)--(1.8000,2.8000);
\draw [gray!10](2.0000,1.4000)--(2.0000,2.6000);
\draw [gray!10](2.2000,1.2000)--(2.2000,2.4000);
\draw [gray!10](2.4000,1.0000)--(2.4000,2.2000);
\draw [gray!10](2.6000,0.8000)--(2.6000,2.0000);
\draw [gray!10](2.8000,0.6000)--(2.8000,1.8000);
\draw [gray!10](3.0000,0.4000)--(3.0000,1.6000);
\draw [gray!10](3.2000,0.2000)--(3.2000,1.4000);
\draw [gray!10](3.4000,0.2000)--(3.4000,1.2000);
\draw [gray!10](3.6000,0.4000)--(3.6000,1.0000);
\draw [gray!10](3.2000,0.2000)--(3.4000,0.2000);
\draw [gray!10](3.0000,0.4000)--(3.6000,0.4000);
\draw [gray!10](2.8000,0.6000)--(3.8000,0.6000);
\draw [gray!10](2.6000,0.8000)--(3.8000,0.8000);
\draw [gray!10](2.4000,1.0000)--(3.6000,1.0000);
\draw [gray!10](2.2000,1.2000)--(3.4000,1.2000);
\draw [gray!10](2.0000,1.4000)--(3.2000,1.4000);
\draw [gray!10](1.8000,1.6000)--(3.0000,1.6000);
\draw [gray!10](1.6000,1.8000)--(2.8000,1.8000);
\draw [gray!10](1.4000,2.0000)--(2.6000,2.0000);
\draw [gray!10](1.2000,2.2000)--(2.4000,2.2000);
\draw [gray!10](1.0000,2.4000)--(2.2000,2.4000);
\draw [gray!10](1.0000,2.6000)--(2.0000,2.6000);
\draw [gray!10](1.0000,2.8000)--(1.8000,2.8000);
\draw [black](3.2000,0.2000)--(3.2000,0.4000)--(3.0000,0.4000)--(3.0000,0.6000)--(2.8000,0.6000)--(2.8000,0.8000)--(2.6000,0.8000)--(2.6000,1.0000)--(2.4000,1.0000)--(2.4000,1.2000)--(2.2000,1.2000)--(2.2000,1.4000)--(2.0000,1.4000)--(2.0000,1.6000)--(1.8000,1.6000)--(1.8000,1.8000)--(1.6000,1.8000)--(1.6000,2.0000)--(1.4000,2.0000)--(1.4000,2.2000)--(1.2000,2.2000)--(1.2000,2.4000)--(1.0000,2.4000)--(1.0000,2.6000)--(1.0000,2.8000)--(1.2000,2.8000)--(1.2000,3.0000)--(1.4000,3.0000)--(1.6000,3.0000)--(1.6000,2.8000)--(1.8000,2.8000)--(1.8000,2.6000)--(2.0000,2.6000)--(2.0000,2.4000)--(2.2000,2.4000)--(2.2000,2.2000)--(2.4000,2.2000)--(2.4000,2.0000)--(2.6000,2.0000)--(2.6000,1.8000)--(2.8000,1.8000)--(2.8000,1.6000)--(3.0000,1.6000)--(3.0000,1.4000)--(3.2000,1.4000)--(3.2000,1.2000)--(3.4000,1.2000)--(3.4000,1.0000)--(3.6000,1.0000)--(3.6000,0.8000)--(3.8000,0.8000);
\draw (1.0000,3.0000)--(1.0000,-0.2000);
\draw (1.0000,3.0000)--(4.2000,3.0000);
\draw (3.2,  2.0) node [] {$\lambda$};
\draw (2.0,  0.8) node [] {$\lambda$};
\draw (2.5,-1) node [] {CPb $\sim \frac{0.161}n e^{2.86 \sqrt{n}}$};
\end{scope}
\begin{scope}[xshift=360, yshift=0]
\fill [gray!30](3.2000,0.2000)--(3.2000,0.4000)--(3.0000,0.4000)--(3.0000,0.6000)--(2.8000,0.6000)--(2.8000,0.8000)--(2.6000,0.8000)--(2.6000,1.0000)--(2.4000,1.0000)--(2.4000,1.2000)--(2.2000,1.2000)--(2.2000,1.4000)--(2.0000,1.4000)--(2.0000,1.6000)--(1.8000,1.6000)--(1.8000,1.8000)--(1.6000,1.8000)--(1.6000,2.0000)--(1.4000,2.0000)--(1.4000,2.2000)--(1.2000,2.2000)--(1.2000,2.4000)--(1.0000,2.4000)--(1.0000,2.6000)--(1.0000,2.8000)--(1.2000,2.8000)--(1.4000,2.8000)--(1.4000,3.0000)--(1.6000,3.0000)--(1.6000,2.8000)--(1.8000,2.8000)--(1.8000,2.6000)--(2.0000,2.6000)--(2.0000,2.4000)--(2.2000,2.4000)--(2.2000,2.2000)--(2.4000,2.2000)--(2.4000,2.0000)--(2.6000,2.0000)--(2.6000,1.8000)--(2.8000,1.8000)--(2.8000,1.6000)--(3.0000,1.6000)--(3.0000,1.4000)--(3.2000,1.4000)--(3.2000,1.2000)--(3.4000,1.2000)--(3.4000,1.0000)--(3.6000,1.0000)--(3.6000,0.8000)--(3.8000,0.8000)--(3.8000,0.6000)--(3.6000,0.6000)--(3.6000,0.4000)--(3.4000,0.4000)--(3.4000,0.2000)--(3.2000,0.2000);
\draw [gray!10](1.0000,2.4000)--(1.0000,2.8000);
\draw [gray!10](1.2000,2.2000)--(1.2000,2.8000);
\draw [gray!10](1.4000,2.0000)--(1.4000,3.0000);
\draw [gray!10](1.6000,1.8000)--(1.6000,3.0000);
\draw [gray!10](1.8000,1.6000)--(1.8000,2.8000);
\draw [gray!10](2.0000,1.4000)--(2.0000,2.6000);
\draw [gray!10](2.2000,1.2000)--(2.2000,2.4000);
\draw [gray!10](2.4000,1.0000)--(2.4000,2.2000);
\draw [gray!10](2.6000,0.8000)--(2.6000,2.0000);
\draw [gray!10](2.8000,0.6000)--(2.8000,1.8000);
\draw [gray!10](3.0000,0.4000)--(3.0000,1.6000);
\draw [gray!10](3.2000,0.2000)--(3.2000,1.4000);
\draw [gray!10](3.4000,0.2000)--(3.4000,1.2000);
\draw [gray!10](3.6000,0.4000)--(3.6000,1.0000);
\draw [gray!10](3.2000,0.2000)--(3.4000,0.2000);
\draw [gray!10](3.0000,0.4000)--(3.6000,0.4000);
\draw [gray!10](2.8000,0.6000)--(3.8000,0.6000);
\draw [gray!10](2.6000,0.8000)--(3.8000,0.8000);
\draw [gray!10](2.4000,1.0000)--(3.6000,1.0000);
\draw [gray!10](2.2000,1.2000)--(3.4000,1.2000);
\draw [gray!10](2.0000,1.4000)--(3.2000,1.4000);
\draw [gray!10](1.8000,1.6000)--(3.0000,1.6000);
\draw [gray!10](1.6000,1.8000)--(2.8000,1.8000);
\draw [gray!10](1.4000,2.0000)--(2.6000,2.0000);
\draw [gray!10](1.2000,2.2000)--(2.4000,2.2000);
\draw [gray!10](1.0000,2.4000)--(2.2000,2.4000);
\draw [gray!10](1.0000,2.6000)--(2.0000,2.6000);
\draw [gray!10](1.0000,2.8000)--(1.8000,2.8000);
\draw [black](3.2000,0.2000)--(3.2000,0.4000)--(3.0000,0.4000)--(3.0000,0.6000)--(2.8000,0.6000)--(2.8000,0.8000)--(2.6000,0.8000)--(2.6000,1.0000)--(2.4000,1.0000)--(2.4000,1.2000)--(2.2000,1.2000)--(2.2000,1.4000)--(2.0000,1.4000)--(2.0000,1.6000)--(1.8000,1.6000)--(1.8000,1.8000)--(1.6000,1.8000)--(1.6000,2.0000)--(1.4000,2.0000)--(1.4000,2.2000)--(1.2000,2.2000)--(1.2000,2.4000)--(1.0000,2.4000)--(1.0000,2.6000)--(1.0000,2.8000)--(1.2000,2.8000)--(1.4000,2.8000)--(1.4000,3.0000)--(1.6000,3.0000)--(1.6000,2.8000)--(1.8000,2.8000)--(1.8000,2.6000)--(2.0000,2.6000)--(2.0000,2.4000)--(2.2000,2.4000)--(2.2000,2.2000)--(2.4000,2.2000)--(2.4000,2.0000)--(2.6000,2.0000)--(2.6000,1.8000)--(2.8000,1.8000)--(2.8000,1.6000)--(3.0000,1.6000)--(3.0000,1.4000)--(3.2000,1.4000)--(3.2000,1.2000)--(3.4000,1.2000)--(3.4000,1.0000)--(3.6000,1.0000)--(3.6000,0.8000)--(3.8000,0.8000);
\draw (1.0000,3.0000)--(1.0000,-0.2000);
\draw (1.0000,3.0000)--(4.2000,3.0000);
\draw (3.2,  2.0) node [] {$\lambda$};
\draw (2.0,  0.8) node [] {$\lambda$};
\draw (2.5,-1) node [] {CPc $\sim \frac{0.114}n e^{2.86 \sqrt{n}}$};
\end{scope}
\end{tikzpicture}
\end{align*}

\centerline{Fig. 2. Asymptotic formulas for skew PP and CP of fixed widths.}
\medskip

We see that the order of the asymptotic formula for skew plane partitions of fixed width
depends only on the width, 
not on the profile (the skew zone) itself. 
We may think that this is natural by intuition. 
However, the case for cylindric partitions shows that this is not always true.

\medskip
The rest of the paper is arranged in the following way. 
First, in Section~\ref{sec:proofmain} we prove our main theorems
on asymptotic formulas.
Later,
we compute the asymptotic formulas
for the numbers of skew plane partitions and cylindric partitions in Sections \ref{sec:PP} and \ref{sec:CP} respectively.


\section{Proofs of main asymptotic formulas}\label{sec:proofmain} 

In this section we prove the three main asymptotic formulas stated in Theorems~\ref{th:asy_main2}, 
\ref{th:Asy_Ribbon:multi} and \ref{th:p0_asy_main2}.
The basic idea of the proofs  comes from the work of Dewar and Murty \cite{DewarMurty2013}. First let us recall Laplace's method  (see, for example, \cite[p. 36]{Erdelyi1956}).

\begin{lem}[Laplace's method]\label{th:Laplace_method} 
Assume that $f (x) $  is a twice continuously differentiable function on $[a,b ]$ with $ x_{0}\in (a,b)$ the unique point such that  $ f(x_{0})=\max _{[a,b]}f(x)$. Assume additionally that $f''( x_0 ) < 0$. Then 
$$
\int_a^b {{e}^{  n f\left( x \right)} 
dx} \sim 
{e}^{  n f\left( {x_0} \right)} \sqrt {\frac{{2\pi }}{{- n f''\left( {x_0} \right)}}} .
$$
The sign $\sim$ means that the quotient of the left-hand side by the right-hand side tends to $1$ as $n \to +\infty$.  
\end{lem}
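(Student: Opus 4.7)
The plan is the classical localization-plus-Gaussian-comparison argument. Since $x_0$ is the unique maximum of the continuous function $f$ on $[a,b]$ and $f''(x_0)<0$, the integrand $e^{nf(x)}$ becomes sharply peaked at $x_0$ as $n\to\infty$, and only a shrinking neighborhood of $x_0$ contributes to leading order.

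First, I would fix a small parameter $\delta>0$ (to be chosen later) and split
$$\int_a^b e^{nf(x)}\,dx \;=\; \int_{|x-x_0|<\delta} e^{nf(x)}\,dx \;+\; \int_{\substack{x\in[a,b]\\ |x-x_0|\ge\delta}} e^{nf(x)}\,dx.$$
By continuity and the uniqueness of the maximum, there exists $\eta=\eta(\delta)>0$ such that $f(x)\le f(x_0)-\eta$ whenever $|x-x_0|\ge\delta$. Hence the tail is bounded by $(b-a)\,e^{n(f(x_0)-\eta)}$, which is smaller by a factor $e^{-n\eta}$ than the claimed right-hand side (which decays only like $n^{-1/2}e^{nf(x_0)}$) and is therefore negligible.

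For the main piece I would use the Taylor expansion
$$f(x) \;=\; f(x_0) \;+\; \tfrac{1}{2}f''(x_0)(x-x_0)^2 \;+\; R(x), \qquad R(x)=o\bigl((x-x_0)^2\bigr).$$
Given $\varepsilon>0$, shrink $\delta$ so that $|R(x)|\le \varepsilon(x-x_0)^2$ for $|x-x_0|<\delta$. This yields the sandwich
$$e^{n f(x_0)+\tfrac{n}{2}(f''(x_0)-2\varepsilon)(x-x_0)^2} \;\le\; e^{n f(x)} \;\le\; e^{n f(x_0)+\tfrac{n}{2}(f''(x_0)+2\varepsilon)(x-x_0)^2}.$$
Substituting $u=\sqrt{-nf''(x_0)/2}\,(x-x_0)$, the upper and lower Gaussian integrals each equal $e^{nf(x_0)}\sqrt{2\pi/(-nf''(x_0))}$ multiplied by a factor that tends to $1$ as $\varepsilon\to0$ (and with limits of integration tending to $\pm\infty$). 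Sending $n\to\infty$ first and then $\varepsilon\to 0$ gives the stated asymptotic.

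The main obstacle is handling the Taylor remainder uniformly: $\delta$ must be fixed before $n\to\infty$, yet after the Gaussian rescaling the interval $[-\delta,\delta]$ stretches to $[-\delta\sqrt{-nf''(x_0)/2},\delta\sqrt{-nf''(x_0)/2}]$, which exhausts $\mathbb{R}$. The sandwich estimate together with the order of limits ($n\to\infty$, then $\varepsilon\to 0$) is precisely what is needed to make this rigorous; everything else is bookkeeping of elementary Gaussian integrals.
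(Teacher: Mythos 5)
Your argument is correct and is exactly the classical localization-plus-Gaussian-comparison proof of Laplace's method; the paper itself does not prove this lemma but cites it to Erd\'elyi's book, where this is the argument given (note only that the absence of a first-order Taylor term uses $f'(x_0)=0$, which holds because $x_0$ is an interior maximum of the differentiable $f$). The tail bound, the $\varepsilon$-sandwich on $f''$, and the order of limits ($n\to\infty$ first, then $\varepsilon\to 0$ via a $\liminf$/$\limsup$ squeeze) are all handled appropriately.
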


We also need the following lemma.

\begin{lem}\label{th:asy_int}
Suppose that $n$ is a positive integer.
Let $f (x) $ be a non-negative Lebesgue integrable function on $[a,b]$ 
with $ x_{0}\in (a,b)$ the unique point such that  $ f(x_{0})=\max _{[a,b]}f(x)$. 
Assume additionally that $f( x ) $ increases on $(a, x_0)$ and decreases on $(x_0, b)$. Then, 
$$
\int_a^b f(x) dx - \frac{f(x_0)}{n} \leq  \frac1n\sum_{i=\ceil*{na}}^{\floor*{nb}} f(\frac{i}{n}) \leq 
\int_a^b f(x) dx + \frac{f(x_0)}{n}.
$$

\end{lem}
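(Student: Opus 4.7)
The plan is to prove both inequalities simultaneously via a layer-cake (distribution function) argument that exploits the unimodality of $f$. Set $\alpha = \lceil na \rceil$ and $\beta = \lfloor nb \rfloor$. Since $f$ is non-negative with unique maximum $f(x_0)$, increasing on $(a,x_0)$ and decreasing on $(x_0,b)$, the super-level set $\{x \in [a,b] : f(x) \ge y\}$ is a closed subinterval $[a(y), b(y)] \subseteq [a,b]$ for every $y \in (0, f(x_0)]$, with $a \le a(y) \le x_0 \le b(y) \le b$, and is empty for $y > f(x_0)$.

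Next I rewrite both sides as integrals in the level $y$. Fubini's theorem gives
\begin{align*}
\int_a^b f(x)\, dx &= \int_0^{f(x_0)} m(y)\, dy, & \frac{1}{n}\sum_{i=\alpha}^{\beta} f(i/n) &= \int_0^{f(x_0)} \frac{N(y)}{n}\, dy,
\end{align*}
where $m(y) := b(y) - a(y)$ and $N(y) := \#\{i \in \{\alpha, \ldots, \beta\} : a(y) \le i/n \le b(y)\}$. A small but crucial observation is that the index cutoff $\alpha \le i \le \beta$ in the definition of $N(y)$ is automatic: monotonicity of $\lceil\cdot\rceil$ and $\lfloor\cdot\rfloor$ together with $a \le a(y)$ and $b(y) \le b$ give $\alpha \le \lceil na(y) \rceil$ and $\lfloor nb(y) \rfloor \le \beta$, so $N(y) = \max\bigl(0,\ \lfloor nb(y) \rfloor - \lceil na(y) \rceil + 1\bigr)$.

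The elementary estimates $nx - 1 < \lfloor nx \rfloor \le nx$ and $nx \le \lceil nx \rceil < nx + 1$ then yield the pointwise bound $|m(y) - N(y)/n| \le 1/n$ for every $y \in (0, f(x_0)]$ (the degenerate case $N(y) = 0$ only occurs when $m(y) < 1/n$, and the bound still holds there). Integrating this over $y \in [0, f(x_0)]$ produces
\begin{align*}
\biggl| \int_a^b f(x)\, dx - \frac{1}{n}\sum_{i=\alpha}^{\beta} f(i/n) \biggr| \le \int_0^{f(x_0)} \frac{1}{n}\, dy = \frac{f(x_0)}{n},
\end{align*}
which is exactly the two-sided bound. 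The main conceptual step is recognizing that the layer-cake representation is the right tool here: it converts the unimodal sum-versus-integral comparison into the trivial observation that $N(y)/n$ and $m(y)$ both measure the length of an interval and differ by at most $1/n$, so the sharp constant $f(x_0)/n$ drops out after integrating in $y$. The remainder is only elementary floor/ceiling inequalities; a direct Riemann-sum comparison is possible but requires careful pairing of the two sum terms straddling the peak $x_0$ to avoid losing a factor of two.
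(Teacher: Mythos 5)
Your proof is correct, but it follows a genuinely different route from the paper's. The paper argues directly on the $x$-axis: it extends $f$ by $0$ outside $[a,b]$, compares each term $f(i/n)/n$ with the integral over an adjacent cell of length $1/n$ using monotonicity separately on $(a,x_0)$ and on $(x_0,b)$, and pays the single penalty $f(x_0)/n$ for the one cell $[k_0/n,(k_0+1)/n]$ that straddles the peak (in the lower-bound direction this requires the small observation that $\int_{k_0/n}^{(k_0+1)/n}f\geq \frac1n\bigl(f(k_0/n)+f(\tfrac{k_0+1}{n})-f(x_0)\bigr)$, which follows from $f(x_0)\geq(1-u)f(k_0/n)+uf(\tfrac{k_0+1}{n})$ with $u=nx_0-k_0$). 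Your layer-cake argument instead slices in the $y$-direction, where unimodality converts the whole comparison into the statement that an interval of length $m(y)$ contains between $nm(y)-1$ and $nm(y)+1$ points of $\tfrac1n\mathbb{Z}$; the constant $f(x_0)/n$ then appears transparently as $\int_0^{f(x_0)}\frac1n\,dy$, with no case analysis around the peak. The one point you should tighten is the identification $N(y)=\#\{i:f(i/n)\geq y\}$ with the closed-interval count $\max(0,\lfloor nb(y)\rfloor-\lceil na(y)\rceil+1)$: under the stated hypotheses $f$ need not be continuous, so the super-level set is only sandwiched as $(a(y),b(y))\subseteq\{f\geq y\}\subseteq[a(y),b(y)]$ rather than equal to the closed interval. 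This costs nothing, since the open interval already contains at least $nm(y)-1$ points of $\tfrac1n\mathbb{Z}$ and the closed one at most $nm(y)+1$, so the two-sided bound $|N(y)/n-m(y)|\leq 1/n$ survives; but as written the equality is an overstatement. (Both your proof and the paper's tacitly ignore possible pathologies of $f$ at the domain endpoints $a,b$, which lie outside the open intervals of monotonicity; this is harmless in the intended application, where $f$ is continuous.)
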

\begin{proof}
Let $f(x)=0$ for $x\notin [a,b]$. 
Since $f( x ) $ increases on $(a, x_0)$, we have 
$$
\frac{f(k)}{n} \leq \int_{k}^{k+\frac1n} f(x) dx \leq \frac{f(k+\frac1n)}{n}
$$
when $a\leq k< k+\frac1n\leq x_0$. Since $f(x)$ decreases on $(x_0, b)$,
we obtain
$$
\frac{f(k+\frac1n)}{n} \leq \int_{k}^{k+\frac1n} f(x) dx \leq \frac{f(k)}{n}
$$
when $x_0\leq k< k+\frac1n\leq b$.

Let $k_0$ be the integer such that $\frac{k_0}{n}\leq x_0 < \frac{k_0+1}{n},$ we have
$$
\int_{\frac{k_0}{n}}^{\frac{k_0+1}{n}} f(x) dx \leq \frac{f(x_0)}{n}. 
$$
Therefore 
\begin{align*}
\int_a^b f(x) dx 
&\leq
\int_{\frac{\ceil*{na}-1}{n}}^{ \frac{\floor*{nb}+1}{n} } f(x) dx
\\&=
\int_{\frac{\ceil*{na}-1}{n}}^{ \frac{k_0}{n} } f(x) dx
+
\int_{\frac{k_0}{n}}^{ \frac{k_0+1}{n}} f(x) dx
+
\int_{\frac{k_0+1}{n}}^{ \frac{\floor*{nb}+1}{n} } f(x) dx
\\&\leq
\frac1n\sum_{i=\ceil*{na}}^{k_0} f(\frac{i}{n})
+
\frac1n f(x_0)
+
\frac1n\sum_{i=k_0+1}^{\floor*{nb}} f(\frac{i}{n})
\\&\leq
\frac1n\sum_{i=\ceil*{na}}^{\floor*{nb}} f(\frac{i}{n})
+
\frac1n f(x_0).
\end{align*}

On the other hand,
\begin{align*}
\int_a^b f(x) dx 
&\geq
\int_{\frac{\ceil*{na}}{n}}^{ \frac{\floor*{nb}}{n} } f(x) dx
\\&=
\int_{\frac{\ceil*{na}}{n}}^{ \frac{k_0}{n} } f(x) dx
+
\int_{\frac{k_0}{n}}^{ \frac{k_0+1}{n}} f(x) dx
+
\int_{\frac{k_0+1}{n}}^{ \frac{\floor*{nb}}{n} } f(x) dx
\\&\geq
\frac1n\sum_{i=\ceil*{na}}^{k_0-1} f(\frac{i}{n})
+
\frac1n 
\left(
-f(x_0)+f(\frac{k_0}{n})+f(\frac{k_0+1}{n})
\right)
+
\frac1n\sum_{i=k_0+2}^{\floor*{nb}} f(\frac{i}{n})
\\&\geq
\frac1n\sum_{i=\ceil*{na}}^{\floor*{nb}} f(\frac{i}{n})
-
\frac1n f(x_0).\qedhere
\end{align*}
\end{proof}

\medskip

Now we can give the proof of Theorem \ref{th:asy_main2}.
\begin{proof}[Proof of Theorem \ref{th:asy_main2}]
Without loss of generality, we can assume that $v_1,v_2>0.$
For $0<x<1$, let 
$$
f(x)=r_1^{1-p}x^p+r_2^{1-p}(1-x)^p 
$$
and
$$
g(x)=x^{-b_1-1+\frac{p}{2}}(1-x)^{-b_2-1+\frac{p}{2}}. 
$$
Then 
$$
f'(x)=pr_1^{1-p}x^{p-1}-pr_2^{1-p}(1-x)^{p-1}
$$
and 
$$
f''(x)=p(p-1)r_1^{1-p}x^{p-2}+p(p-1)r_2^{1-p}(1-x)^{p-2}<0.
$$
The function $f'(x)$ has only one zero point $$x_0=\frac{r_1}{r_1+r_2}.$$ Therefore $f(x)$ is increasing on $(0, x_0)$, has a maximum of
$(r_1+r_2)^{1-p}$ at~$x_0$, and is decreasing on $(x_0, 1)$.

\medskip

Let $0< \epsilon < 1$ be a given constant. By continuity, there exists $0<\delta<\min\{\frac12 x_0,\frac12 (1-x_0)\}$
such that if $|x - x_0| < 2\delta$, then
\begin{align}\label{eq:asy_proof_1}
(1 - \epsilon)g(x_0) < g(x) < (1 + \epsilon)g(x_0). 
\end{align}
From \eqref{eq:asym_a} and \eqref{eq:asym_c}, for large enough $n$ we have
\begin{align}\label{eq:asy_proof_2}
(1 - \epsilon)t_1\psi_{t_1n}(v_1, r_1, b_1 ; p)< a_{t_1n} < (1 + \epsilon)t_1\psi_{t_1n}(v_1, r_1, b_1; p)
\end{align}
and
\begin{align}\label{eq:asy_proof_3}
(1 - \epsilon)t_2\psi_{t_2n}(v_2, r_2, b_2; p)< c_{t_2n} < (1 + \epsilon)t_2\psi_{t_2n}(v_2, r_2, b_2; p).
\end{align}
\medskip

Suppose that $0\leq i \leq t_1t_2-1$ is a given integer. We just need to prove that \eqref{eq:asy_d_n} is true for $n=mt_1t_2+i$ where $m \in \mathbb{N}$.
By B\'ezout's identity, there exists some $\alpha_i,\beta_i\in\mathbb{N}_{\geq 0}, 0\leq \alpha_i\leq t_2-1$ such that
$$
t_1\alpha_i+t_2\beta_i=i.
$$

For large enough $n=mt_1t_2+i$, let
\begin{align*}
	j_1(n)&=\ceil*{\frac{(x_0-\delta)n-\alpha_it_1}{t_1t_2}}, \\
	j_2(n)&=\floor*{\frac{(x_0+\delta)n-\alpha_it_1}{t_1t_2} },\\
 	j_3(n)&=\floor*{\frac{n-\alpha_it_1}{t_1t_2}}.
\end{align*}
We have
$$
d_{n}=  H_1({n})+H_2({n})+H_3({n}),
$$
where 
\begin{align*}
	H_1({n})&= \sum_{ j=0 }^{j_1(n)-1} a_{(\alpha_i+jt_2)t_1}c_{(mt_1-jt_1+\beta_i)t_2},\\
	H_2({n})&= \sum_{ j=j_2(n)+1}^{j_3(n)} a_{(\alpha_i+jt_2)t_1}c_{(mt_1-jt_1+\beta_i)t_2},\\
	H_3({n})&= \sum_{ j=j_1(n) }^{j_2(n)} a_{(\alpha_i+jt_2)t_1}c_{(mt_1-jt_1+\beta_i)t_2}.
\end{align*}
For $H_1(n)$, we have 
\begin{align*}
H_1(n)
&=
	O\left(n^{|b_1+1-\frac{p}{2}|+|b_2+1-\frac{p}{2}|}  \sum_{ j=0 }^{j_1(n)-1} \exp\left(  n^p f\left(\frac{(\alpha_i+jt_2)t_1}{n}\right)   \right) \right)
\\&=
O\left(n^{|b_1+1-\frac{p}{2}|+|b_2+1-\frac{p}{2}|+1}   \exp\left(  n^p f(x_0-\delta)   \right) \right)
\\&=
o\left(n^{-b_1-b_2- 1+ \frac{p}{2}} \exp\left(  n^p f(x_0)   \right) \right).
\end{align*}
Similarly, we have 
\begin{align*}
H_2(n)
&=
o\left(n^{-b_1-b_2- 1+ \frac{p}{2}} \exp\left(  n^p f(x_0)   \right) \right).
\end{align*}

\medskip

Next we just need to estimate $H_3(n)$. For large enough $n$, we can 
assume that every 
$a_{(\alpha_i+jt_2)t_1}$ and $c_{(mt_1-jt_1+\beta_i)t_2}$
in $H_3(n)$ satisfy \eqref{eq:asy_proof_2} and \eqref{eq:asy_proof_3}.
Let
\begin{align*}
	A_0&=  {\frac{p(1-p)}{2\pi}}  t_1 t_2 v_1v_2 {r_1^{b_1+(1-p)/2}} {r_2^{b_2+(1-p)/2}},\\
	A_1(n)&=
A_0 n^{-b_1-b_2- 2+ {p}}\sum_{ j=j_1(n)} ^{j_2(n)} g(\frac{(\alpha_i+jt_2)t_1}{n})
\exp\left(  n^p f\left(\frac{(\alpha_i+jt_2)t_1}{n}\right)   \right),\\
	A_2(n)&=g(x_0) A_0 n^{-b_1-b_2- 2+ {p}}\sum_{ j=j_1(n)}^{j_2(n)} 
\exp\left(  n^p f\left(\frac{(\alpha_i+jt_2)t_1}{n}\right)   \right),\\
	A_3(n)&=
\int_{\frac{(x_0-\delta)n-\alpha_it_1}{t_1t_2n}}^{\frac{(x_0+\delta)n-\alpha_it_1}{t_1t_2n}} \exp\left(  n^p f(\frac{(\alpha_i+nxt_2)t_1}{n})    \right) dx.
\end{align*}

Therefore 
\begin{equation*}
	(1 - \epsilon)^2 A_1(n)< H_3(n) < (1 + \epsilon)^2 A_1(n).
\end{equation*}
Then by \eqref{eq:asy_proof_1}, we obtain
\begin{equation} \label{eq:asy_proof_4}
(1 - \epsilon)^3 A_2(n) < H_3(n)
	< (1 + \epsilon)^3 A_2(n).
\end{equation}

Replace $f(x)$ by $\exp\left(  n^p f(\frac{(\alpha_i+nxt_2)t_1}{n})   \right)$ in Lemma \ref{th:asy_int}, we have 
\begin{align} \label{eq:asy_proof_5}
	A_3(n)- \frac{\exp\left(  n^p f(x_0)   \right)}{n} 
&\leq
	\frac1n\sum_{j=j_1(n)}^{j_2(n)}   \exp\left(  n^p f\left(\frac{(\alpha_i+jt_2)t_1}{n}\right)   \right)    
\\&\leq 
	A_3(n)+ \frac{\exp\left(  n^p f(x_0)   \right)}{n}. \nonumber
\end{align}

Put \eqref{eq:asy_proof_4} and \eqref{eq:asy_proof_5} together, we obtain
\begin{align} \label{eq:asy_proof_6}
&
(1 - \epsilon)^3
\left(
	A_3(n) - \frac{\exp\left(  n^p f(x_0)   \right)}{n}
\right)
	< \frac{ H_3(n)}{ g(x_0)A_0n^{-b_1-b_2- 1+ {p}}}
	\\&
\qquad <
(1 + \epsilon)^3
\left(
	A_3(n) + \frac{\exp\left(  n^p f(x_0)   \right)}{n}
\right). \nonumber
\end{align}

Notice that when $n$ is large enough, 
\begin{align} \label{eq: x0-delta}
\frac{x_0-\frac{3\delta}{2}}{t_1t_2}<\frac{(x_0-\delta)n-\alpha_it_1}{t_1t_2n}<\frac{x_0-\frac{\delta}{2}}{t_1t_2},
\end{align}
\begin{align} \label{eq: x0+delta}
\frac{x_0+\frac{\delta}{2}}{t_1t_2}<\frac{(x_0+\delta)n-\alpha_it_1}{t_1t_2n}<\frac{x_0+\frac{3\delta}{2}}{t_1t_2}.
\end{align}
Also we have
\begin{align*}
 n^p f\left(\frac{(\alpha_i+nxt_2)t_1}{n}\right)   
	&=
r_1^{1-p}\left({(\alpha_i+nxt_2)t_1}\right)^p+ r_2^{1-p}\left(1- {(\alpha_i+nxt_2)t_1}\right)^p \\
	&= n^pf(xt_1t_2)+o(1).
\end{align*}

Then by \eqref{eq: x0-delta}, \eqref{eq: x0+delta} and Lemma \ref{th:Laplace_method} (\emph{Laplace's method}) we have 
\begin{align}
	A_3(n)
& \sim 
 \int_{\frac{(x_0-\delta)n-\alpha_it_1}{t_1t_2n}}^{\frac{(x_0+\delta)n-\alpha_it_1}{t_1t_2n}} \exp\left(  n^p f(xt_1t_2)   \right) dx \nonumber\\
	&   \sim 
\exp(  n^p f\left( {x_0} \right)) \sqrt {\frac{{2\pi }}{{ -n^p t_1^2 t_2^2 f''\left( {x_0} \right)}}}.\label{eq:asy_proof_7}
\end{align}

This means that when $n$ is large enough,
\begin{align*} 
&
(1-\epsilon)\exp(  n^p f\left( {x_0} \right)) \sqrt {\frac{{2\pi }}{{ -n^p t_1^2 t_2^2 f''\left( {x_0} \right)}}}<
	A_3(n) \\&<(1+\epsilon) \exp(  n^p f\left( {x_0} \right)) \sqrt {\frac{{2\pi }}{{ -n^p t_1^2 t_2^2 f''\left( {x_0} \right)}}}.
\end{align*}

Therefore
\begin{align*} 
&
(1 - \epsilon)^4
n^{- \frac{p}{2}}
\exp(  n^p f\left( {x_0} \right)) \sqrt {\frac{{2\pi }}{{ - t_1^2 t_2^2 f''\left( {x_0} \right)}}} + o\left(n^{- \frac{p}{2}} \exp\left(  n^p f(x_0)   \right) \right)
\\&
	< \frac{ H_3(n)}{ g(x_0)A_0n^{-b_1-b_2- 1+ {p}}}
\\&
<
(1 + \epsilon)^4
n^{- \frac{p}{2}}
\exp(  n^p f\left( {x_0} \right)) \sqrt {\frac{{2\pi }}{{ - t_1^2 t_2^2 f''\left( {x_0} \right)}}} + o\left(n^{- \frac{p}{2}} \exp\left(  n^p f(x_0)   \right) \right).
\end{align*}
Finally we obtain
\begin{align*}
	d_n&= H_1(n)+H_2(n)+H_3(n)  	\sim H_3(n)\\
	& \sim g(x_0)A_0n^{-b_1-b_2- 1+ \frac{p}{2}}
\exp(  n^p f\left( {x_0} \right)) \sqrt {\frac{{2\pi }}{{ - t_1^2 t_2^2 f''\left( {x_0} \right)}}}.
\end{align*}

But 
$$
g(x_0)=(\frac{r_1}{r_1+r_2})^{-b_1-1+\frac{p}{2}}(\frac{r_2}{r_1+r_2})^{-b_2-1+\frac{p}{2}},
$$
$$
f(x_0)=(r_1+r_2)^{1-p},
$$
$$
f''(x_0)=\frac{p(p-1)
(r_1+r_2)^{3-p}}
{r_1r_2}.
$$

Therefore 
\begin{equation*}
d_n \sim 
\psi_{n}(v_1v_2, r_1+r_2, b_1+b_2; p).\qedhere
\end{equation*}
\end{proof}

Our  asymptotic formula can be easily iterated 
for handling a product of multiple power series $F_1(q)F_2(q)\cdots F_k(q)$.

\begin{thm}\label{th:asy_multi}
Suppose that $m>0,$ $1\leq i\leq m,$ $t=\gcd(z_1,z_2,\ldots, z_m).$ 
Let 
$$
F_i(q)= \sum_{n=0}^\infty a_n^{(i)} q^{z_in}
$$
and 
$$
G(q)=\prod_{i=1}^m F_i(q)=\sum_{n=0}^\infty d_{n} q^{tn}
$$
where
\begin{align}\label{eq:asy_multi_1}
a_n^{(i)} \sim z_i\cdot\psi_{z_in}( v_i,  r_i, b_i; p).
\end{align}
Then
\begin{align}\label{eq:asy_multi_2}
d_{n} \sim t\cdot\psi_{tn}(\prod_{i=1}^m v_i, \sum_{i=1}^m r_i,\sum_{i=1}^m b_i; p).
\end{align}
\end{thm}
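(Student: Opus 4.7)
The plan is to argue by induction on $m$, using Theorem \ref{th:asy_main2} as the main engine, combined with a scaling identity for $\psi_n$ to handle the case of non-coprime periods. The base case $m=1$ is immediate: $t = z_1$ and the conclusion \eqref{eq:asy_multi_2} reduces to the hypothesis \eqref{eq:asy_multi_1}. For the inductive step from $m$ to $m+1$, let $H(q) := \prod_{i=1}^{m} F_i(q) = \sum_{n\geq 0} e_n q^{sn}$ with $s := \gcd(z_1,\ldots,z_m)$. By the inductive hypothesis, $e_n \sim s\,\psi_{sn}(V,R,B;p)$, where $V := \prod_{i=1}^m v_i$, $R := \sum_{i=1}^m r_i$, $B := \sum_{i=1}^m b_i$. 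Then $G(q) = H(q)\,F_{m+1}(q) = \sum d_n q^{tn}$ with $t = \gcd(s,z_{m+1})$.

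Writing $s = ts'$ and $z_{m+1} = tz'$ so that $\gcd(s',z')=1$, and substituting $Q := q^t$, we obtain $\widetilde H(Q)\,\widetilde F(Q) = \sum d_n Q^n$ where $\widetilde H(Q) := \sum e_n Q^{s'n}$ and $\widetilde F(Q) := \sum a_n^{(m+1)} Q^{z'n}$. The goal is to apply Theorem \ref{th:asy_main2} in the variable $Q$, since now $\gcd(s',z')=1$. The remaining issue is that Theorem \ref{th:asy_main2} requires coefficient asymptotics with pre-factors $s'$ and $z'$, not $s$ and $z_{m+1}$. This is bridged by the scaling identity
$$k\,\psi_{kn}(v,r,b;p) \;=\; \psi_n\bigl(v\,k^{-b/(1-p)},\;k^{p/(1-p)}\,r,\;b\,;\,p\bigr), \qquad k>0,$$
which follows by a direct computation from definition \eqref{def:psi}, using $(k^{p/(1-p)}r)^{1-p}=k^p r^{1-p}$ together with the identity $-b/(1-p)+(p/(1-p))(b+(1-p)/2) = -b+p/2$. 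Applied with $k=t$, it converts the asymptotic of $e_n$ into $s'\,\psi_{s'n}\bigl(V\,t^{-B/(1-p)},\;t^{p/(1-p)}R,\;B;\,p\bigr)$, and analogously for $a_n^{(m+1)}$.

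Theorem \ref{th:asy_main2} then produces an asymptotic for $d_n$ of the form $\psi_n(\cdot)$ with parameters obtained by multiplying the $v$'s, summing the $r$'s, and summing the $b$'s of the two scaled inputs. Applying the scaling identity in reverse (again with $k=t$) repackages this as $t\,\psi_{tn}\bigl(\prod_{i=1}^{m+1}v_i,\;\sum_{i=1}^{m+1}r_i,\;\sum_{i=1}^{m+1}b_i;\,p\bigr)$, completing the induction. The main obstacle is purely algebraic bookkeeping: verifying the scaling identity and checking that the fractional powers of $t$ introduced when scaling down exactly cancel with those introduced when scaling back up. All genuine asymptotic analysis is black-boxed through Theorem \ref{th:asy_main2}; the work is confined to tracking parameter transformations.
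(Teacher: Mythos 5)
Your proof is correct and follows essentially the same route as the paper's: the paper also reduces to the two-factor case (via ``without loss of generality $m=2$'', i.e.\ the induction you make explicit), uses the identical scaling identity $z_i\,\psi_{z_in}(v_i,r_i,b_i;p)=\frac{z_i}{t}\,\psi_{z_in/t}(v_it^{-b_i/(1-p)},r_it^{p/(1-p)},b_i;p)$ to reduce to coprime periods, applies Theorem~\ref{th:asy_main2} in the variable $q^t$, and scales back. The only difference is that you spell out the induction and the verification of the scaling identity, which the paper leaves implicit.
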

\begin{proof}
Without loss of generality, we can assume that $m=2$.
For $i=1,2,$ we have 
$$
F_i(q)= \sum_{n=0}^\infty a_n^{(i)} (q^t)^{z_in/t}
$$
where
\begin{align*}
a_n^{(i)} \sim z_i\cdot\psi_{z_in}( v_i,  r_i, b_i; p) = 
\frac{z_i}{t}\psi_{z_in/t}(v_it^{-\frac{b_i}{1-p}}, r_it^{\frac{p}{1-p}}, b_i; p).
\end{align*}
Replace $q$ by $q^t$, $t_i$ by $\frac{z_i}{t}$ in Theorem \ref{th:asy_main2} we have
\begin{align*}
	d_{n} &\sim \psi_{n}(v_1v_2t^{-\frac{b_1+b_2}{1-p}}, (r_1+r_2)t^{\frac{p}{1-p}}, b_1+b_2; p) \\
	&=t\psi_{tn}(v_1v_2, r_1+r_2, b_1+b_2; p). \qedhere
\end{align*}
\end{proof}

Hardy-Ramanujan \cite{HardyRamanujan1918} have discovered the asymptotic formula for the number of integer partitions, which was extended by Ingham \cite{Ingham1941} in $1941$.
\begin{lem}[Ingham]\label{th:Asy_Ribbon:2}
Let $x$ and $y$ be two positive integers with $\gcd(x,y)=1$. Suppose that 
$$
\prod_{k\geq 0}\frac {1}{1-q^{xk+y}}= \sum_{n=0}^\infty a_n q^n.
$$
Then
$$
a_n\sim \psi_{n}(v, r, b; \frac12),
$$
where
$$
v=\frac{\Gamma(y/x)}{\sqrt{x\pi}} (\frac {x}{2})^{y/x}, \qquad
r=\frac {2\pi^2}{3x}, \qquad 
b=\frac{y}{2x} - \frac 14.
$$
\end{lem}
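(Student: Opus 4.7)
The plan is to follow the Mellin--saddle-point derivation underlying Ingham's Tauberian theorem: first compute the singular expansion of $G(\tau):=\log F(e^{-\tau})$ as $\tau\to 0^{+}$, where $F(q):=\prod_{k\geq 0}(1-q^{xk+y})^{-1}$, via a Mellin transform, and then convert this into an asymptotic for $a_{n}$ by saddle-point analysis of the Cauchy integral. The prescribed $v$, $r$, $b$ should drop out of the residues at $s=1$ and $s=0$ of the relevant Mellin integral.

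First, I would expand
\begin{equation*}
G(\tau)=\sum_{m\geq 1}\sum_{k\geq 0}\frac{e^{-m(xk+y)\tau}}{m}
\end{equation*}
and apply the Mellin formula $e^{-z}=\frac{1}{2\pi i}\int_{(c)}\Gamma(s)z^{-s}\,ds$ for $c>0$, together with $\sum_{k\geq 0}(xk+y)^{-s}=x^{-s}\zeta(s,y/x)$ where $\zeta(s,a)$ denotes the Hurwitz zeta, to obtain, for $c>1$,
\begin{equation*}
G(\tau)=\frac{1}{2\pi i}\int_{(c)}\Gamma(s)\,\zeta(s+1)\,x^{-s}\,\zeta(s,y/x)\,\tau^{-s}\,ds.
\end{equation*}
Shifting the contour past the simple pole at $s=1$ (residue $1$ of $\zeta(s,y/x)$) and the double pole at $s=0$ (from $\Gamma(s)\zeta(s+1)$ interacting with the regular value $\zeta(0,y/x)=\tfrac12-\tfrac{y}{x}$), and invoking Lerch's formula $\zeta'(0,y/x)=\log\Gamma(y/x)-\tfrac12\log(2\pi)$, one obtains
\begin{equation*}
G(\tau)=\frac{\pi^{2}}{6x\tau}+\Bigl(\frac{y}{x}-\frac12\Bigr)\log\tau+C_{0}+o(1),\qquad \tau\to 0^{+},
\end{equation*}
with $e^{C_{0}}=\Gamma(y/x)\,x^{y/x}/\sqrt{2\pi x}$.

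Next, I would apply a saddle-point analysis to $a_{n}=\frac{1}{2\pi}\int_{-\pi}^{\pi}F(e^{-\tau+i\theta})e^{(\tau-i\theta)(n+1)}\,d\theta$ at the saddle $\tau_{n}=\pi/\sqrt{6xn}$; the main exponent evaluates to $2\sqrt{n\pi^{2}/(6x)}=\sqrt{nr}$ with $r=2\pi^{2}/(3x)$, while the Gaussian integration in $\theta$ yields a prefactor of order $A^{1/4}/(2\sqrt{\pi}\,n^{3/4})$ with $A=\pi^{2}/(6x)=r/4$. Assembling the pieces gives
\begin{equation*}
a_{n}\sim\frac{e^{C_{0}}\,A^{\,y/(2x)}}{2\sqrt{\pi}\,n^{y/(2x)+1/2}}\,e^{\sqrt{nr}},
\end{equation*}
and the identity $A^{y/(2x)}=r^{y/(2x)}\cdot 2^{-y/x}$ combined with the explicit $e^{C_{0}}$ reshapes the prefactor into $\frac{v}{2\sqrt{2\pi}}\cdot\frac{r^{b+1/4}}{n^{b+3/4}}$ with the stated $v,r,b$, matching $\psi_{n}(v,r,b;\tfrac12)$.

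The main obstacle is the bookkeeping at the double pole $s=0$: the product of the Laurent expansions of $\Gamma(s)$, $\zeta(s+1)$, $x^{-s}$, $\tau^{-s}$ and $\zeta(s,y/x)$ produces several log- and constant-terms, and only after applying Lerch's formula do they collapse into the clean factor $\Gamma(y/x)(x/2)^{y/x}/\sqrt{x\pi}$ defining $v$. One must also justify the contour shift via polynomial bounds on $\Gamma\cdot\zeta$ along vertical lines and the decay of $F(e^{-\tau+i\theta})$ for $|\theta|$ bounded away from $0$; the assumption $\gcd(x,y)=1$ is precisely what ensures $F$ has no secondary essential singularities on the unit circle beyond $q=1$, so the saddle-point contribution truly dominates. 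Alternatively, one may invoke Meinardus' theorem applied to the Dirichlet series $D(s)=x^{-s}\zeta(s,y/x)$, which packages the entire computation into a single black-box statement.
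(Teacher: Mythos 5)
The paper offers no proof of this lemma: it is stated as a known result with a citation to Ingham's 1941 paper and then used as a black box to feed Theorem \ref{th:Asy_Ribbon:multi_general} (hence Theorem \ref{th:Asy_Ribbon:multi}). So there is no in-paper argument to compare against; you have supplied an independent derivation, and it is the standard Meinardus-type one. Your computations check out: $D(s)=x^{-s}\zeta(s,y/x)$ has residue $1/x$ at $s=1$, giving the leading term $\pi^2/(6x\tau)$ and hence $r=2\pi^2/(3x)$; the double pole at $s=0$ contributes $(\tfrac{y}{x}-\tfrac12)\log\tau + D'(0)$ with $e^{D'(0)}=\Gamma(y/x)\,x^{y/x}/\sqrt{2\pi x}$ by Lerch's formula; and the saddle point $\tau_n=\sqrt{A/n}$ with $A=r/4$ assembles into exactly $\psi_n(v,r,b;\tfrac12)$ with the stated $v$ and $b=\tfrac{y}{2x}-\tfrac14$ (I re-derived the prefactor $e^{D'(0)}A^{y/(2x)}/(2\sqrt{\pi}\,n^{y/(2x)+1/2})$ and it agrees). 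Two caveats. First, what you flag as the ``main obstacle'' is in fact the entire analytic content of the theorem: the contour shift and, above all, the minor-arc bound are deferred in your sketch, and your claim that $\gcd(x,y)=1$ leaves no other essential singularities on $|q|=1$ is not literally true (the product still has essential singularities at other roots of unity $e^{2\pi ia/c}$ whenever infinitely many parts $xk+y$ are divisible by $c$); the correct statement is that those singularities have strictly smaller exponential order, which is exactly what Meinardus's minor-arc condition quantifies, so invoking Meinardus's theorem on $D(s)$ is the cleanest way to close the gap. Second, Ingham's own proof is genuinely different from your circle-method sketch: it is a Tauberian theorem that passes from the singular expansion of $F(e^{-\tau})$ as $\tau\to 0^{+}$ to the asymptotics of the monotone summatory function $\sum_{k\le n}a_k$, and thence to $a_n$, with no minor-arc analysis at all. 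Either completion turns your outline into a full proof of the lemma.
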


Ingham's result can be further generalized as follows, which will be useful
for finding the asymptotic formula for skew plane partitions.

\begin{thm}\label{th:Asy_Ribbon:multi_general}
Suppose that $m>0,$  $x_i>0,y_i>0,$ $z_i=\gcd(x_i,y_i)$ for $1\leq i\leq m$,  $t=\gcd(z_1,z_2,\ldots, z_m)$ and $$
\qquad
v_i=\frac{\Gamma(y_i/x_i)}{\sqrt{x_i\pi}} (\frac {x_i}{2})^{y_i/x_i}, \qquad
r_i=\frac {2\pi^2}{3x_i}, \qquad 
b_i=\frac{y_i}{2x_i} - \frac 14.
$$   
Let
$$
\prod_{i=1}^m \prod_{k\geq 0}\frac {1}{1-q^{x_ik+y_i}}=\sum_{n=0}^\infty d_{n} q^{tn}.
$$
Then
\begin{align}\label{eq:asy_product_2}
d_{n} \sim t\cdot\psi_{tn}(\prod_{i=1}^m v_i, \sum_{i=1}^m r_i,\sum_{i=1}^m b_i; \frac12).
\end{align}
\end{thm}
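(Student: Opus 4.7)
The strategy is to reduce the statement to a direct application of Theorem~\ref{th:asy_multi}, using Ingham's lemma (Lemma~\ref{th:Asy_Ribbon:2}) to handle each factor individually.

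For each $i$, write $X_i = x_i/z_i$ and $Y_i = y_i/z_i$, so that $\gcd(X_i, Y_i) = 1$. Since every exponent $x_i k + y_i$ is a multiple of $z_i$, the substitution $Q = q^{z_i}$ gives
$$
F_i(q) := \prod_{k \geq 0} \frac{1}{1 - q^{x_i k + y_i}} = \prod_{k \geq 0} \frac{1}{1 - Q^{X_i k + Y_i}} = \sum_{n \geq 0} a_n^{(i)} q^{z_i n}.
$$
Applying Lemma~\ref{th:Asy_Ribbon:2} in the variable $Q$ yields $a_n^{(i)} \sim \psi_n(v_i', r_i', b_i'; 1/2)$, with the Ingham parameters
$$
v_i' = \frac{\Gamma(Y_i/X_i)}{\sqrt{X_i\pi}} \Bigl(\frac{X_i}{2}\Bigr)^{Y_i/X_i}, \qquad r_i' = \frac{2\pi^2}{3 X_i}, \qquad b_i' = \frac{Y_i}{2 X_i} - \frac{1}{4}.
$$

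Next I would translate these primed parameters into the paper's $v_i, r_i, b_i$. Since $Y_i/X_i = y_i/x_i$, one reads off $b_i' = b_i$ and $r_i' = z_i r_i$, while a short computation yields $v_i' = v_i\, z_i^{1/2 - y_i/x_i} = v_i\, z_i^{-2 b_i}$. The key bookkeeping identity is then
$$
\psi_n(v_i', r_i', b_i'; 1/2) = z_i \cdot \psi_{z_i n}(v_i, r_i, b_i; 1/2),
$$
whose exponential factor equals $\exp(\sqrt{z_i n\, r_i})$ on both sides, and whose rational prefactor reduces, after cancelling the common $r_i^{b_i+1/4}$, to the equality $v_i'\, z_i^{b_i + 1/4} = v_i\, z_i^{1/4 - b_i}$. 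This is precisely the identity $v_i' = v_i\, z_i^{-2 b_i}$ just verified, so we obtain $a_n^{(i)} \sim z_i\, \psi_{z_i n}(v_i, r_i, b_i; 1/2)$, which is exactly the hypothesis~\eqref{eq:asy_multi_1} of Theorem~\ref{th:asy_multi}.

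Finally, applying Theorem~\ref{th:asy_multi} to the factors $F_1(q), \ldots, F_m(q)$ immediately produces~\eqref{eq:asy_product_2}. The only nontrivial step in the plan is the rescaling identity tying Ingham's normalization (which requires $\gcd = 1$) to the normalization prescribed by Theorem~\ref{th:asy_multi} under the change of variable $q \mapsto q^{z_i}$; this is pure algebra in the definition of $\psi$ and should present no real obstacle.
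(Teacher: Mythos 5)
Your proposal is correct and follows essentially the same route as the paper: substitute $q\mapsto q^{z_i}$ to reduce each factor to Ingham's lemma with coprime parameters $x_i/z_i,\,y_i/z_i$, verify the rescaling identity $\psi_{n}(v_i z_i^{\frac12-y_i/x_i}, r_i z_i, b_i; \frac12) = z_i\,\psi_{z_i n}(v_i, r_i, b_i; \frac12)$, and conclude by Theorem~\ref{th:asy_multi}. The algebraic bookkeeping you carry out (in particular $v_i' = v_i z_i^{-2b_i}$) matches the paper's "easy to check" step exactly.
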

\begin{proof}
	Let
$$
	\prod_{k\geq 0}\frac {1}{1-q^{x_ik+y_i}}= \sum_{n=0}^\infty a_n^{(i)} q^{z_in}.
$$
It is easy to check that 
$$
 \psi_{n}(v_i z_i^{\frac12-\frac{y_i}{x_i}}, r_i z_i, b_i; \frac12) = z_i\cdot\psi_{z_i n}( v_i,  r_i, b_i; \frac12).
$$
Replace $q$ by $q^{z_i},$ $x$ by $x_i/z_i,$ $y$ by $y_i/z_i$ in Lemma \ref{th:Asy_Ribbon:2} we obtain 
$$
a_n^{(i)}\sim \psi_{n}(v_i z_i^{\frac12-\frac{y_i}{x_i}}, r_i z_i, b_i; \frac12) \sim z_i\cdot\psi_{z_i n}( v_i,  r_i, b_i; \frac12).
$$
Thus \eqref{eq:asy_product_2} follows from Theorem \ref{th:asy_multi}.
\end{proof}

The above result implies Theorem \ref{th:Asy_Ribbon:multi} by letting $t=1$.
Next we give the proof of Theorem \ref{th:p0_asy_main2}.

\begin{proof}[Proof of Theorem \ref{th:p0_asy_main2}]
By induction, it is easy to see that we just need to prove the case $m=1$, $t_1=t$. 
Notice that $(x^\alpha \exp(\beta x^p))'=(\beta px^p+\alpha)x^{\alpha-1}
	\exp(\beta x^p)$.
Let $0<\epsilon<1$. Then there exists some $N>0$ such that for any $x\geq N$, we have $(x^\alpha \exp(\beta x^p))'>0$; and
for any $n\geq N$, we have
\begin{align}\label{eq:p0_asy_proof_2}
(1 - \epsilon)n^\alpha \exp(\beta n^p)< a_{n} < (1 + \epsilon)n^\alpha
	\exp(\beta n^p).
\end{align}

But $$d_n=\sum_{j=0}^{\floor*{\frac{n}{t}}}a_{n-tj}=\sum_{j=0}^{\floor*{\frac{n-N}{t}}}a_{n-tj}+\sum_{j=\floor*{\frac{n-N}{t}}+1}^{\floor*{\frac{n}{t}}}a_{n-tj}.$$

First we have 
\begin{align*}
&
\sum_{j=\floor*{\frac{n-N}{t}}+1}^{\floor*{\frac{n}{t}}}a_{n-tj}=O(1).
\end{align*}
On the other hand,   we have
\begin{align}
&
(1 - \epsilon)\sum_{j=0}^{\floor*{\frac{n-N}{t}}}(n-tj)^\alpha \exp(\beta(n-tj)^p)< \sum_{j=0}^{\floor*{\frac{n-N}{t}}}a_{n-tj}
\\&
< (1 + \epsilon)\sum_{j=0}^{\floor*{\frac{n-N}{t}}}(n-tj)^\alpha \exp(\beta(n-tj)^p).
\end{align}

Since $x^\alpha \exp(\beta x^p)$ increases for $x\geq N$, we have

\begin{align*}
&
\frac1t\int_{n-t\floor*{\frac{n-N-t}{t}}}^{n}  x^\alpha \exp(\beta x^p) dx
\leq
\sum_{j=0}^{\floor*{\frac{n-N}{t}}}(n-tj)^\alpha \exp(\beta(n-tj)^p)
\\&
\leq \frac1t\int_{n-t\floor*{\frac{n-N}{t}}}^{n+t}  x^\alpha \exp(\beta x^p) dx.
\end{align*}

But when $n$ is large enough, we have 
\begin{align*}
\int_{n-t\floor*{\frac{n-N}{t}}}^{n+t}  x^\alpha \exp(\beta x^p) dx
&\sim \int_{n-t\floor*{\frac{n-N}{t}}}^{n+t}  \Bigl(x^\alpha+\frac{\alpha+1-p}{\beta p}x^{\alpha-p}\Bigr) \exp(\beta x^p) dx
\\&=
\int_{n-t\floor*{\frac{n-N}{t}}}^{n+t}  \bigl(\frac{x^{\alpha+1-p}}{\beta p} \exp(\beta x^p)\bigr)' dx\\
	& \sim \frac{(n+t)^{\alpha+1-p}}{\beta p} \exp(\beta (n+t)^p)\\
& \sim \frac{n^{\alpha+1-p}}{\beta p} \exp(\beta n^p),
\end{align*}
where the last $\sim$ is guaranteed by the condition $0<p<1$.

Similarly, we have 
\begin{align*}
\int_{n-t\floor*{\frac{n-N-t}{t}}}^{n}  x^\alpha \exp(\beta x^p) dx
&\sim  \frac{n^{\alpha+1-p}}{\beta p} \exp(\beta n^p).
\end{align*}

Therefore when $n$ is large enough, we have

$$
(1 - \epsilon)^2\frac{n^{\alpha+1-p}}{\beta pt} \exp(\beta n^p)< \sum_{j=0}^{\floor*{\frac{n-N}{t}}}a_{n-tj} < (1 + \epsilon)^2\frac{n^{\alpha+1-p}}{\beta pt} \exp(\beta n^p).
$$

Finally we obtain
\begin{equation*}
d_n \sim \frac{n^{\alpha+1-p}}{\beta pt} \exp(\beta n^p).\qedhere
\end{equation*}

\end{proof}


\section{Asymptotic formulas for skew plane partitions}\label{sec:PP} 
Various plane partitions have been widely studied since MacMahon \cite{MacMahon1899,MacMahon1916}.
In particular, the generating function for skew plane partitions with profile 
$\delta$ has been derived (see \cite{Borodin2007,  Sagan1993, Stanley1972}). 
In this section, first we obtain the asymptotic formula
for ordinary plane partitions of fixed width. We say that a (skew) plane partition $\omega$ has a width $m$ if $\omega_{i,j}=0$ for $i>m$.

\begin{thm}\label{th:AsymPPmn}
	Let $\PP{PP}_m(n)$ be the number of 
 plane partitions $\omega$ of width $m$ and size $n$. Then, 
	\begin{equation}\label{eq:AsymPPmn}
	\PP{PP}_m(n) \ \sim \  
	2^{-\frac{m^2+2m+5}{4}}  (\frac{m}{3})^{ \frac {m^2+1}4} \pi^\frac{m^2-m}{2}  \prod_{i=1}^{m-1} i!\ \times   n^{-\frac {m^2+3}4} 
\exp(\pi\sqrt{2mn/3}).
	\end{equation}
\end{thm}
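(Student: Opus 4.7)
The plan is to recognize the generating function for plane partitions of width $m$ as a product of the type treated by Theorem~\ref{th:Asy_Ribbon:multi}, and then carry out the routine but careful simplification of the resulting constants.

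First I would specialize MacMahon's formula~\eqref{eq:gfPP} by truncating to at most $m$ rows: since ``width $m$'' means $\omega_{i,j}=0$ for $i>m$, we obtain
\begin{equation*}
\sum_{\omega\in\PP{PP}_m} z^{|\omega|}
\;=\;\prod_{i=1}^{m}\prod_{j=1}^{\infty}\frac{1}{1-z^{i+j-1}}
\;=\;\prod_{i=1}^{m}\prod_{k\ge 0}\frac{1}{1-z^{k+i}},
\end{equation*}
which collecting factors of equal exponent is the classical $\prod_{n\ge 1}(1-z^n)^{-\min(n,m)}$. The point is that in its second form it is already displayed as a product along $m$ arithmetic progressions, exactly the shape accepted by Theorem~\ref{th:Asy_Ribbon:multi}.

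Next I apply that theorem with $x_i=1$ and $y_i=i$ for $1\le i\le m$; the coprimality hypothesis $\gcd(x_1,\ldots,x_m,y_1,\ldots,y_m)=1$ holds trivially since $y_1=1$. A short computation of its three constants gives
\begin{equation*}
v=\prod_{i=1}^{m}\frac{\Gamma(i)}{\sqrt{\pi}}\Bigl(\frac12\Bigr)^{i}
=\pi^{-m/2}\,2^{-m(m+1)/2}\prod_{i=1}^{m-1}i!,
\qquad
r=\frac{2m\pi^{2}}{3},
\qquad
b=\sum_{i=1}^{m}\Bigl(\frac{i}{2}-\frac14\Bigr)=\frac{m^{2}}{4}.
\end{equation*}
Substituting these into~\eqref{eq:asy_product_2_1} produces $\sqrt{nr}=\pi\sqrt{2mn/3}$ for the exponential, $n^{-b-3/4}=n^{-(m^{2}+3)/4}$ for the polynomial factor, and $r^{b+1/4}=(2m\pi^{2}/3)^{(m^{2}+1)/4}$ for part of the leading constant, all of which match~\eqref{eq:AsymPPmn}.

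The last step is purely bookkeeping: I combine $v$, $\tfrac{1}{2\sqrt{2\pi}}=2^{-3/2}\pi^{-1/2}$, and $r^{(m^{2}+1)/4}=2^{(m^{2}+1)/4}(m/3)^{(m^{2}+1)/4}\pi^{(m^{2}+1)/2}$ and separate the three bases. The exponent of $2$ collects as $-\tfrac{m(m+1)}{2}-\tfrac{3}{2}+\tfrac{m^{2}+1}{4}=-\tfrac{m^{2}+2m+5}{4}$, and the exponent of $\pi$ as $-\tfrac{m}{2}-\tfrac{1}{2}+\tfrac{m^{2}+1}{2}=\tfrac{m^{2}-m}{2}$, leaving $(m/3)^{(m^{2}+1)/4}\prod_{i=1}^{m-1}i!$ for the rest; this reproduces~\eqref{eq:AsymPPmn} exactly. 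I do not anticipate any real analytic obstacle: once the generating function is cast as a product along arithmetic progressions, Theorem~\ref{th:Asy_Ribbon:multi} supplies everything, and the only mild difficulty is keeping the exponent arithmetic straight.
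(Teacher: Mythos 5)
Your proof is correct and follows essentially the same route as the paper: both reduce to the product $\prod_{i=1}^{m}\prod_{k\ge 0}(1-z^{k+i})^{-1}$ and apply Theorem~\ref{th:Asy_Ribbon:multi} with $x_i=1$, $y_i=i$, and your constant bookkeeping ($v$, $r=2m\pi^2/3$, $b=m^2/4$, and the exponents of $2$ and $\pi$) all checks out. The only cosmetic difference is that the paper obtains this generating function from~\eqref{eq:gfSkewPP} with profile $1^m(-1)^\infty$, whereas you invoke MacMahon's classical row-bounded product (note that it is not literally a ``truncation'' of~\eqref{eq:gfPP}, but the identity you use is correct and is exactly the same one).
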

\begin{proof}
Let $\delta= 1^m (-1)^\infty$ in \eqref{eq:gfSkewPP} we have 
$$
\sum_{\omega \in \PP{PP}_m} z^{|\omega|}
=
\prod_{k\geq 0}\prod_{i=1}^m\frac{1}{(1-z^{k+i})}.
$$
Therefore by Theorem \ref{th:Asy_Ribbon:multi} the number of plane partitions with profile $\delta$ and  size $n$ is asymptotic to
\begin{equation*}
\psi_{n}(\prod_{i=1}^m \frac{(i-1)!}{\sqrt{\pi}} (\frac {1}{2})^{i},\,
	\frac {2m\pi^2}{3},\, \frac {m^2}4;\, \frac12),
\end{equation*}
	which is equal to the right-hand side of \eqref{eq:AsymPPmn}.
\end{proof}
When $m=1$, the above theorem gives the Hardy-Ramanujan asymptotic formula 
for the number of integer partitions
	\begin{equation}
	\PP{PP}_1(n) \ \sim \  
		\frac{1}{4\sqrt{3}n} \exp(\pi\sqrt{2n/3}).
	\end{equation}
When $m=3$, this is the example (PPa)  
illustrated in Fig.~2. 
Actually we have 
$$
\sum_{\omega \in \PP{PPa}} z^{|\omega|}
=
\prod_{k\geq 0}\frac{1}{(1-z^{k+1})(1-z^{k+2})(1-z^{k+3})}.
$$
Therefore the number of plane partitions of width $3$ and  size $n$ is asymptotic to
$$
2^{-4} \pi^3 n^{-3}\exp(\pi\sqrt{2n}).
$$

\medskip

More generally, we can derive the asymptotic formula for the number of skew plane 
partitions with fixed width.

\begin{thm}\label{th:AsymSkPPmn}
Let $\delta=(\delta',(-1)^\infty)=(\delta'_1,\delta'_2,\ldots,\delta'_{m-1},(-1)^\infty)$ be a profile, 	and $\PP{SkewPP}_\delta(n)$ be the number of 
skew plane partitions  with profile $\delta$  and size $n$. 
 Then 
	\begin{align}
	\PP{SkewPP}_\delta(n) \ &\sim \  
		2^{-\frac {\ell^2+2\ell+5}{4}}(\frac {\ell}{3})^{\frac {\ell^2+1}{4}}  \pi^{\frac{\ell^2-\ell}{2}} 
\prod_{\substack{ i<j   \\ \delta'_i > \delta'_j   } } 
\frac{1}{j-i} \prod_{\delta'_{i}=1} (m-i-1)!  \nonumber\\
		&\qquad \times 
n^{-\frac {\ell^2+3}{4}  } 
		\exp(\pi\sqrt{2\ell n/3}),\label{eq:Asym_SkewPP}
	\end{align}
	where $\ell:=|\delta'|_1$.
\end{thm}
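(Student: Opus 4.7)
The plan is to combine Theorems~\ref{th:Asy_Ribbon:multi} and~\ref{th:p0_asy_main2} with the generating function \eqref{eq:gfSkewPP}, after splitting the product according to whether the running index $j$ falls inside the finite prefix $\delta'$ or in the infinite $(-1)$-tail. Writing $\mathcal{F}=\{(i,j):i<j\leq m-1,\ \delta'_i=1,\ \delta'_j=-1\}$, formula \eqref{eq:gfSkewPP} becomes
\[
\sum_{\omega\in\PP{SkewPP}_\delta}z^{|\omega|}=\prod_{(i,j)\in\mathcal{F}}\frac{1}{1-z^{j-i}}\cdot\prod_{\delta'_i=1}\prod_{k\geq 0}\frac{1}{1-z^{k+(m-i)}}.
\]

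First I would apply Theorem~\ref{th:Asy_Ribbon:multi} to the infinite factor with $x_i=1$ and $y_i=m-i$ indexed by the $\ell$ positions where $\delta'_i=1$. This gives for its coefficients the asymptotic $a_n\sim \frac{v\,r^{b+1/4}}{2\sqrt{2\pi}}\,n^{-(b+3/4)}\exp(\sqrt{rn})$ with $r=2\ell\pi^2/3$, $b=\sum_{\delta'_i=1}(\frac{m-i}{2}-\frac{1}{4})$ and $v=\prod_{\delta'_i=1}\frac{(m-i-1)!}{\sqrt{\pi}}2^{-(m-i)}$. Next I would apply Theorem~\ref{th:p0_asy_main2} with $p=1/2$, $\beta=\sqrt{r}=\pi\sqrt{2\ell/3}$ and the $N:=|\mathcal{F}|$ values $t=j-i$ for $(i,j)\in\mathcal{F}$: each factor $(1-z^{j-i})^{-1}$ asymptotically multiplies the coefficient by $\frac{2\sqrt{n}}{\beta(j-i)}$, producing
\[
\PP{SkewPP}_\delta(n)\sim\frac{v\,r^{b+1/4}\,2^{N-1}}{\sqrt{2\pi}\,\beta^N\prod_{(i,j)\in\mathcal{F}}(j-i)}\,n^{N/2-b-3/4}\,\exp(\beta\sqrt{n}).
\]

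All that remains is bookkeeping, which rests on the single combinatorial identity
\[
\sum_{\delta'_i=1}(m-i)=N+\frac{\ell(\ell+1)}{2},
\]
checked by listing the positions of the $1$'s in $\delta'$ as $i_1<\cdots<i_\ell$ and observing that the number of $-1$'s strictly to the right of $i_k$ in $\delta'$ is $(m-1-i_k)-(\ell-k)$. This identity forces $b=\frac{N}{2}+\frac{\ell^2}{4}$, so that $N/2-b-3/4=-(\ell^2+3)/4$ and $b+\frac{1}{4}-\frac{N}{2}=(\ell^2+1)/4$, which are precisely the exponents of $n$ and $r$ that appear in the target.

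The main obstacle is therefore not conceptual but arithmetic: collapsing the prefactor $v\,r^{(\ell^2+1)/4}\,2^{N-1}/\sqrt{2\pi}$ into the compact form $2^{-(\ell^2+2\ell+5)/4}(\ell/3)^{(\ell^2+1)/4}\pi^{(\ell^2-\ell)/2}\prod_{\delta'_i=1}(m-i-1)!$. Using the identity above to rewrite $2^{-\sum(m-i)}=2^{-N-\ell(\ell+1)/2}$ inside $v$ and expanding $r^{(\ell^2+1)/4}=2^{(\ell^2+1)/4}\ell^{(\ell^2+1)/4}3^{-(\ell^2+1)/4}\pi^{(\ell^2+1)/2}$, one checks that every $N$ and every explicit position $i$ cancels from the powers of $2$, $\pi$, $\ell$, and $3$, leaving exactly the constant of \eqref{eq:Asym_SkewPP}.
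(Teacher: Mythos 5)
Your proposal is correct and follows essentially the same route as the paper: split \eqref{eq:gfSkewPP} into the finite product over inversion pairs of $\delta'$ and the infinite product over positions with $\delta'_i=1$, apply Theorem~\ref{th:Asy_Ribbon:multi} to the latter and Theorem~\ref{th:p0_asy_main2} to the former, and close the computation with the identity $\sum_{\delta'_i=1}(m-i)-\inv(\delta')=\binom{\ell+1}{2}$, which is exactly the identity the paper uses. Your arithmetic for the exponents and the constant checks out.
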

\begin{proof}
By \eqref{eq:gfSkewPP} we have 
$$
\sum_{\omega \in \PP{SkewPP}_\delta} z^{|\omega|}
=
\prod_{\substack{ i<j   \\ \delta'_i > \delta'_j   } } 
\frac{1}{1-z^{j-i}}  \times \prod_{k\geq 0}\prod_{\delta'_{i}=1}\frac{1}{1-z^{k+m-i}}.
$$ 
By Theorem \ref{th:Asy_Ribbon:multi} the coefficient of $z^n$ in   
$$
\prod_{k\geq 0}\prod_{\delta'_{i}=1}\frac{1}{1-z^{k+m-i}}
$$
is asymptotic to
$$
\psi_{n}(\prod_{\delta'_{i}=1} \frac{(m-i-1)!}{\sqrt{\pi}} (\frac {1}{2})^{m-i},\,
	\frac {2\ell\pi^2}{3},\,
	\frac {1}2\sum_{\delta'_{i}=1}(m-i)-\frac{\ell}{4};\, \frac12).
$$
Denote by $\inv({\delta'})$ the number of pairs $(i,j)$ 
such that $ i<j, \delta'_i > \delta'_j $. 
	Notice that $ \sum_{\delta'_{i}=1}(m-i)- \inv({\delta'})=\binom{\ell+1}{2}$. 
Then by Theorem \ref{th:p0_asy_main2} the number $\PP{SkewPP}_\delta(n)$ is asymptotic to
\begin{align*}
&	2^{-3/2+\inv({\delta'})}\pi^{-\frac 12} (\frac {2\ell\pi^2}{3})^{\frac {\ell^2+1}{4}}
\prod_{\substack{ i<j   \\ \delta'_i > \delta'_j   } } 
\frac{1}{j-i} \prod_{\delta'_{i}=1} \frac{(m-i-1)!}{\sqrt{\pi}} 
	(\frac {1}{2})^{m-i}
\\&\times
n^{-\frac {\ell^2+3}{4}  } 
\exp(\pi\sqrt{2\ell n/3}),
\end{align*}
which is equal to the right-hand side of \eqref{eq:Asym_SkewPP}.
\end{proof}

The two examples (PPb) and (PPc) for $\ell=|\delta'|_1=3$ 
illustrated in Fig.~2 correspond to the following special cases 
of Theorem \ref{th:AsymSkPPmn}.

(PPb) Let $\delta= (1,1,-1,1,  (-1)^\infty)$ we have 
\begin{align*}
\sum_{\omega \in \PP{PPb}} z^{|\omega|}
&=
\frac{1}{1-z}\frac{1}{1-z^2}\prod_{k\geq 0}\frac{1}{(1-z^{k+1})(1-z^{k+3})(1-z^{k+4})}
.
\end{align*}
Therefore by Theorems \ref{th:Asy_Ribbon:multi} and \ref{th:p0_asy_main2} the number of skew plane partitions with profile $\delta$ and size $n$ is asymptotic to
$$
3\cdot 2^{-4} \pi^3 n^{-3}\exp(\pi\sqrt{2n}).
$$

(PPc) Let $\delta= (1,1,-1,-1,1,  (-1)^\infty)$ we have 
\begin{align*}
\sum_{\omega \in \PP{PPc}} z^{|\omega|}
&=
\frac{1}{1-z}\frac{1}{1-z^2}\frac{1}{1-z^2}\frac{1}{1-z^3}\prod_{k\geq 0}\frac{1}{(1-z^{k+1})(1-z^{k+4})(1-z^{k+5})}
.
\end{align*}
Therefore the number of skew plane partitions with profile $\delta$ and  size $n$ is asymptotic to
$$
3\cdot 2^{-3} \pi^3 n^{-3}\exp(\pi\sqrt{2n}).
$$


\section{Asymptotic formula for cylindric partitions}\label{sec:CP} 
First we recall Borodin's formula \eqref{eq:gfCP} written in the following form.

\begin{lem}[Borodin \cite{Borodin2007}]\label{th:Borodin}
Let $\delta=(\delta_i)_{1\leq i\leq h}$ be a profile. 
Then the generating function for the cylindric partitions with profile $\delta$ is
\begin{equation*}
\sum_{\omega \in \PP{CP}_\delta}  z^{|\omega|}
=  
	\prod_{k\geq 0}  
	\prod_{t\in W_\delta} \frac{1}{1-z^{hk+t}},
\end{equation*}
where
\begin{align*}
	W_\delta&=\{h\} \cup \{ j-i : i < j,\ \delta_i > \delta_j\} \cup \{ h+i-j: i < j,\ \delta_i < \delta_j \}. 
\end{align*}
\end{lem}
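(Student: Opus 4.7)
The plan is to derive this lemma as a purely notational reformulation of Borodin's generating function formula \eqref{eq:gfCP}; no new combinatorial argument is required. First I would fix $k \geq 0$ and parse the three groups of factors appearing inside $\prod_{k \geq 0}(\cdots)$ in \eqref{eq:gfCP}: the singleton factor $\frac{1}{1-z^{hk+h}}$ corresponds to the contribution $\{h\}$ in $W_\delta$; the factor $\prod_{i<j,\,\delta_i>\delta_j}\frac{1}{1-z^{hk+j-i}}$ corresponds to the contribution $\{j-i : i<j,\ \delta_i > \delta_j\}$; and $\prod_{i<j,\,\delta_i<\delta_j}\frac{1}{1-z^{hk+h+i-j}}$ corresponds to $\{h+i-j : i<j,\ \delta_i < \delta_j\}$.

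Next I would combine these three products into a single product indexed by $t \in W_\delta$. The key observation is that $W_\delta$ must be understood as the multiset union of the three pieces above: a value like $j-i$ can arise from several distinct index pairs, and each such occurrence contributes its own factor in \eqref{eq:gfCP}. Once $W_\delta$ is viewed as a multiset, the identity
$$
\prod_{t \in W_\delta} \frac{1}{1-z^{hk+t}}
= \frac{1}{1-z^{hk+h}} \prod_{\substack{i<j \\ \delta_i > \delta_j}} \frac{1}{1-z^{hk+j-i}} \prod_{\substack{i<j \\ \delta_i < \delta_j}} \frac{1}{1-z^{hk+h+i-j}}
$$
holds by inspection, and then taking the product over all $k \geq 0$ recovers exactly the right-hand side of \eqref{eq:gfCP}.

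Accordingly, there is essentially no obstacle: the lemma is a bookkeeping identity, and the only subtlety to guard against is the (incorrect) temptation to treat $W_\delta$ as a set rather than a multiset, which would silently drop factors. The reason for isolating this form as a separate statement is that the right-hand side is in exactly the shape to which Theorem \ref{th:Asy_Ribbon:multi} (and subsequently Theorem \ref{th:p0_asy_main2}) can be applied: the outer product $\prod_{k \geq 0}$ ranges over an arithmetic progression with common difference $h$, and the inner product over $t \in W_\delta$ collects the finitely many residues $y_i = t$ with $x_i = h$ that feed directly into the parameters $v_i$, $r_i$, $b_i$ of those theorems. This is precisely the setup that the remainder of Section~\ref{sec:CP} will exploit to derive asymptotic formulas for $\PP{CP}_\delta(n)$ of fixed profile.
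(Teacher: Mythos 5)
Your proposal is correct and matches the paper exactly: the paper offers no proof of this lemma, presenting it simply as Borodin's formula \eqref{eq:gfCP} "written in the following form," which is precisely the bookkeeping reformulation you describe. Your explicit remark that $W_\delta$ must be read as a multiset (so that repeated values of $j-i$ or $h+i-j$ each contribute their own factor) is a genuine and worthwhile clarification that the paper leaves implicit but relies on, e.g.\ in the computation $\#W_\delta = 1+2K$ in the proof of Theorem \ref{th:CP_ASY}.
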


In this section we derive the asymptotic formula for the number of cylindric partitions.
\begin{thm}\label{th:CP_ASY}
Let 
$\delta=(\delta_j)_{1\leq j\leq h}$ be a profile. When $1\leq |\delta|_1 \leq h-1$,
the number of cylindric partitions with profile $\delta$ is asymptotic to
\begin{equation*}
 \prod_{\substack{i < j \\ \delta_i > \delta_j}}  
	\Gamma(\frac{j-i}{h}) 
\prod_{\substack{i < j \\ \delta_i < \delta_j}} \Gamma(\frac{h+i-j}{h}) \, \,
	\frac{\sqrt{1+2K}}{4\sqrt {3}\cdot (2\pi)^K} 
   \times
\frac{1}{n}
\exp\left({\pi\sqrt{\frac{2(1+2K)n}{3h}}}\right),
\end{equation*}
	where $K=|\delta|_1 |\delta|_{-1}/2$.
\end{thm}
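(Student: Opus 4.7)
The plan is to apply Borodin's formula (Lemma~\ref{th:Borodin}) to recast the generating function in the form handled by Theorem~\ref{th:Asy_Ribbon:multi}, and then to simplify the resulting constants using one combinatorial identity. By Lemma~\ref{th:Borodin},
\begin{equation*}
\sum_{\omega\in\PP{CP}_\delta}z^{|\omega|}=\prod_{k\ge 0}\prod_{t\in W_\delta}\frac{1}{1-z^{hk+t}},
\end{equation*}
and as a multiset $W_\delta$ has size $1+|\delta|_1|\delta|_{-1}=1+2K$. I would apply Theorem~\ref{th:Asy_Ribbon:multi} with $m=1+2K$, $x_i=h$ for every $i$, and the $y_i$ running over the multiset $W_\delta$. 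The hypothesis $\gcd(x_1,\dots,x_m,y_1,\dots,y_m)=1$ holds because $1\le|\delta|_1\le h-1$ forces a consecutive sign change in $\delta$: a descent contributes $1$ to $W_\delta$ while an ascent contributes $h-1$, and both are coprime to $h$.

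Theorem~\ref{th:Asy_Ribbon:multi} then yields an asymptotic with parameters
\begin{equation*}
v=\prod_{t\in W_\delta}\frac{\Gamma(t/h)}{\sqrt{h\pi}}\Bigl(\frac{h}{2}\Bigr)^{t/h},\qquad r=(1+2K)\frac{2\pi^2}{3h},\qquad b=\frac{S}{2h}-\frac{1+2K}{4},
\end{equation*}
where $S:=\sum_{t\in W_\delta}t$. The exponential immediately matches: $\sqrt{nr}=\pi\sqrt{2(1+2K)n/(3h)}$. The crux is the identity $S=h(K+1)$, which makes $b+3/4=1$ (producing the factor $1/n$) and $b+1/4=1/2$ (so $r^{b+1/4}=\pi\sqrt{2(1+2K)/(3h)}$).

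To prove the identity, let $A=\{i:\delta_i=1\}$ and $B=\{i:\delta_i=-1\}$, so that $A\sqcup B=\{1,\dots,h\}$ and $|A|\,|B|=2K$. A descent pair $(i,j)$ with $\delta_i=1,\,\delta_j=-1$ contributes $j-i$ to $W_\delta$, and an ascent pair contributes $h+i-j$; in both situations this equals the cyclic forward distance $(b-a)\bmod h$ in $\mathbb{Z}/h\mathbb{Z}$ for the corresponding $(a,b)\in A\times B$. Hence $S-h=\sum_{a\in A}\sum_{b\in B}((b-a)\bmod h)$. Using $B=\{1,\dots,h\}\setminus A$ together with the elementary fact $\sum_{c\in\mathbb{Z}/h\mathbb{Z}}((c-a)\bmod h)=h(h-1)/2$, the inner sum becomes $h(h-1)/2-\sum_{a'\in A}((a'-a)\bmod h)$; summing over $a\in A$ and noting that $\sum_{a,a'\in A}((a'-a)\bmod h)=|A|(|A|-1)h/2$ by pairing, we obtain $S-h=|A|\,|B|\,h/2=Kh$.

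What remains is a routine bookkeeping of constants. From $S=h(K+1)$ one finds $v=\bigl(\prod_{t\in W_\delta}\Gamma(t/h)\bigr)(h\pi)^{-(1+2K)/2}(h/2)^{K+1}$; multiplying by $r^{1/2}/(2\sqrt{2\pi})$ cancels the powers of $h$ entirely, and the remaining factors of $\pi$, $2$ and $3$ collapse to $\sqrt{1+2K}/(4\sqrt{3}(2\pi)^K)$. The Gamma product factors naturally as $\prod_{i<j,\,\delta_i>\delta_j}\Gamma((j-i)/h)\cdot\prod_{i<j,\,\delta_i<\delta_j}\Gamma((h+i-j)/h)$, with $\Gamma(1)=1$ absorbing the term $t=h$, which yields the stated formula. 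The principal obstacle is the arithmetic identity $S=h(K+1)$; once it is in hand, everything else reduces to a straightforward check of constant factors.
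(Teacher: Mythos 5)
Your proposal is correct and follows the paper's route exactly: Borodin's formula, then Theorem~\ref{th:Asy_Ribbon:multi} applied to the multiset $W_\delta$ with all $x_i=h$, reducing everything to the identity $\sum_{t\in W_\delta}t=h(K+1)$ and a check of constants. The only real difference is how that identity is established --- the paper notes that the relevant sum is invariant under swapping adjacent letters of $\delta$ and hence depends only on $h$ and $|\delta|_1$, whereas you compute it directly as a sum of cyclic distances over $A\times B$; both arguments are valid, and you additionally verify the $\gcd$ hypothesis of Theorem~\ref{th:Asy_Ribbon:multi} (via an adjacent sign change contributing $1$ or $h-1$ to $W_\delta$), a point the paper leaves implicit.
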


\begin{proof}
We have 
\begin{align}
\sum_{t\in W_\delta} \frac{t}{h} 
&=
\frac{h}{h}+ \sum_{i < j,\ \delta_i > \delta_j} \frac{j-i}{h} + \sum_{i < j,\ \delta_i < \delta_j} \frac{h+i-j}{h}  \nonumber
\\
	& =
	1+ \sum_{\substack{ i<j   \\ \delta_i < \delta_j   } } 1 + \frac{|\delta|_1}{h}   \sum_{\delta_i=-1} i - \frac{|\delta|_{-1}}{h}\sum_{\delta_j=1} j.
	\label{eq:sumth}
\end{align}
If we exchange any two adjacent letters in $\delta$, the right-hand side of
	\eqref{eq:sumth} doesn't change. 
Therefore,
the above summation is independent of $\delta$ when $h$ and $|\delta|_1$ are given.  
Hence we have
\begin{align*}
\sum_{t\in W_\delta} \frac{t}{h} 
&=
1+ K.
\end{align*}
On the other hand, $\#W_\delta= 1+2K$. Then we obtain
	\begin{equation}\label{eq:1/4}
\sum_{t\in W_\delta} (\frac{t}{2h} - \frac 14) = \frac 14.
	\end{equation}
By Lemma \ref{th:Borodin} ,  Theorem \ref{th:Asy_Ribbon:multi} and
	Identity \eqref{eq:1/4} the number 
	of cylindric partitions with profile $\delta$ and size $n$ is asymptotic to
$$
\psi_{n}(v, r, b; \frac12),
$$
where  
\begin{align*}
v&= \prod_{t\in W_\delta} \left(\frac{\Gamma(t/h)}{\sqrt{h\pi}} (\frac {h}{2})^{t/h} \right)  
=\prod_{t\in W_\delta} \Gamma(\frac{t}{h})\cdot 2^{-1-K} \sqrt{h} 
	\pi^{-\frac12-K}, \\
r&= \sum_{t\in W_\delta } \frac {2\pi^2}{3h} 
=\frac {2\pi^2}{3h} (1+2K),\\
b&= \sum_{t\in W_\delta} (\frac{t}{2h} - \frac 14) = \frac 14.
\end{align*}
	The proof is achieved by the definition \eqref{def:psi} of $\psi$.
\end{proof}

Notice that in the above theorem, the profile $\delta$ contains both steps ``$1$" and ``$-1$".
In fact, 
when $\delta=(-1)^h$ or  $\delta=(1^h)$,
the number of cylindric partitions with profile $\delta$  is $0$ if 
$n$ is not a multiple of $h$.
If $n=hn_1$, it is  equal to  the number of integer partitions of size $n_1$.

\medskip

The three examples (CPa)-(CPc) for $h=4$ (here we say that these cylindric partitions have width h=4)
illustrated in Fig.~2 correspond to the following special cases 
of Theorem \ref{th:CP_ASY}.

(CPa) Let $\delta=  (1，-1,-1,-1)$. By Lemma \ref{th:Borodin} we have 
$$
\sum_{\omega \in \PP{CPa}} z^{|\omega|}
=
\prod_{k\geq 0}\frac{1}{(1-z^{4k+1})(1-z^{4k+2})(1-z^{4k+3})(1-z^{4k+4})}=\prod_{k\geq 0}\frac{1}{1-z^{k+1}}.
$$
Therefore the number of such cylindric partitions with size $n$ is asymptotic to
$$
\frac{\sqrt{3}}{12}
\times  \frac{1}{n} 
	\exp\Bigl(\pi\sqrt{\frac{2n}{3}} \, \Bigr).
$$

(CPb) Let $\delta=  (1，-1,1,-1)$. By Lemma \ref{th:Borodin} we have 
\begin{align*}
\sum_{\omega \in \PP{CPb}} z^{|\omega|}
&=
\prod_{k\geq 0}\frac{1}{(1-z^{4k+1})^2(1-z^{4k+3})^2(1-z^{4k+4})}
\\&=
\prod_{k\geq 0}\frac{1}{(1-z^{2k+1})^2(1-z^{4k+4})}.
\end{align*}
Therefore the number of such cylindric partitions with size $n$ is asymptotic to
\begin{align*}
\frac18\sqrt{\frac53}
\times  \frac{1}{n} 
	\exp\Bigl(\pi\sqrt{\frac{5n}{6}} \, \Bigr).
\end{align*}

(CPc) Let $\delta=  (1，-1,-1,1)$. By Lemma \ref{th:Borodin} we have 
\begin{align*}
\sum_{\omega \in \PP{CPc}} z^{|\omega|}
&=
\prod_{k\geq 0}\frac{1}{(1-z^{4k+1})(1-z^{4k+2})^2(1-z^{4k+3})^2(1-z^{4k+4})}
\\&=
\prod_{k\geq 0}\frac{1}{(1-z^{k+1})(1-z^{4k+2})}.
\end{align*}
Therefore the number of such cylindric partitions with size $n$ is asymptotic to
$$
\frac18\sqrt{\frac56}
\times  \frac{1}{n} 
	\exp\Bigl(\pi\sqrt{\frac{5n}{6}} \, \Bigr).
$$


\bibliographystyle{plain}
\bibliography{x14.bib}

\end{document}